 \theoremstyle{theorem}
 \newtheorem{thm}{Theorem}[section]
 \newtheorem{lem}{Lemma}[section]
  \newtheorem{remark}{Remark}[section]
 \newtheorem{defn}{Definition}[section]
\newcommand{\cA}{{\mathcal A}}
\newcommand{\cC}{{\mathcal C}}
\newcommand{\cK}{{\mathcal K}}
\newcommand{\cM}{{\mathcal M}}
\newcommand{\cS}{{\mathcal S}}
\newcommand{\cX}{{\mathcal X}}
\newcommand{\mB}{{\mbox B}}
\def\R{\mathbb{R}}
\def\1{\mathbb{1}}
\def\bc{\begin{center}}
\def\ec{\end{center}}
\def\be{\begin{equation}}
\def\ee{\end{equation}}
\def\ba{\begin{array}}
\def\ea{\end{array}}
\def\benu{\begin{enumerate}}
\def\eenu{\end{enumerate}}
\def\bt{\begin{theorem}}
\def\et{\end{theorem}}
\def\bl{\begin{lemma}}
\def\el{\end{lemma}}
\def\bco{\begin{corollary}}
\def\eco{\end{corollary}}
\def\br{\begin{remark}}
\def\er{\end{remark}}
\def\bd{\begin{definition}}
\def\ed{\end{definition}}
\def\bp{\begin{proposition}}
\def\ep{\end{proposition}}
\def\bo{\begin{proof}}
\def\eo{\end{proof}}
\def\bx{\begin{example}}
\def\ex{\end{example}}
\def\pa{\partial}
\def\a{\alpha}
 \def\de{\delta}
\def\lam{\lambda} \def\Lam{\Lambda}
\def\ve{\varepsilon}
\def\sig{\sigma}
\def\w{\omega}
\def\gam{\gamma}\def\Gam{\Gamma}\def\e{{\rm e}}
\def\~{\widetilde}
\def\A{\forall}
\def\ol{\overline}
\def\ul{\underline}
\def\Cap{\bigcap}
\def\Cup{\bigcup}
\def\ra{\rightarrow}
\def\8{\infty}
\def\X{\times}
\def\mb{\mbox}
\def\ss{\subset}
\def\.{\cdot}
\def\leq{\leqslant}\def\geq{\geqslant}\def\d{{\rm d}}
\def\Hs{\hspace{0.8cm}}
\def\hs{\hspace{0.4cm}}
\def\Vs{\vskip10pt}
\def\vs{\vskip5pt}
\def\[{\left[}
\def\]{\right]}
\def\({\left(}
\def\){\right)}
\title{Bifurcation from Infinity of the Schr\"odinger Equation via Invariant Manifolds
  }
\author{
\small{Chunqiu Li} \footnote{E-mail:
lichunqiu@wzu.edu.cn},\Hs 
{Jintao Wang} \footnote{Corresponding author. E-mail: wangjt@wzu.edu.cn}  ~\\\\
\small\it Department of Mathematics, Wenzhou University,\\ \small\it Wenzhou, Zhejiang Province, 325035, P. R. China\\
}
\date{\small\today}
\begin{document}

\maketitle
\begin{abstract}\baselineskip 14pt
This paper is concerned with the bifurcation from infinity of the nonlinear Schr\"odinger equation $$-\Delta u+V(x)u=\lam u+f(x,u),\hs x\in \mathbb{R}^N.$$ We treat this problem in the framework of dynamical systems by considering the corresponding parabolic equation on unbounded domains. Firstly, we establish a global invariant manifold for the parabolic equation on $\R^N$. Then, we restrict the parabolic equation to this invariant manifold, which generates a system of finite dimension. Finally, we use the Conley index theory and the shape theory of attractors to establish some new results on bifurcations from infinity and multiplicity of solutions 
of the Schr\"odinger equation 
under an appropriate Landesman-Lazer type condition. 
\\\\
\textbf{Keywords}: Conley index; Bifurcation from infinity; Invariant manifold; Schr\"odinger equation; Parabolic equation
 \\\\
\textbf{MSC2010}: 37B30, 35B32, 35K55, 58J55
\end{abstract}


\setcounter {equation}{0}
\section{Introduction}

In this paper we are concerned with the following nonlinear Schr\"odinger equation
\be\label{e1.1}
-\Delta u+V(x)u=\lam u+f(x,u),\hs x\in \mathbb{R}^N,
\ee
where the potential $V\in L^\infty(\R^N)$, $N\geq 1$, $\lam\in \R$ is the bifurcation parameter, and $f$ is bounded and satisfies the following Landesman-Lazer type condition
\be\label{1.2}
  \liminf\limits_{s\rightarrow+\infty}f(x,s)\geq\ol{f}>0,\Hs
  \limsup\limits_{s\rightarrow-\infty}f(x,s)\leq-\underline{f}<0
  \ee uniformly with respect to $x\in\R^N$ (where $\ol{f}$ and $\underline{f}$ are independent of $x$). We are basically interested in the bifurcation from infinity and multiplicity of solutions of the equation.

The bifurcation and multiplicity of elliptic equations near resonance have aroused much interest in the past decades. This topic can be traced back to the earlier work of  Mawhin and Schmitt \cite{MS}, where the authors studied the problem of elliptic equations of the following form
\be\label{1.3}
-\Delta u=\lam u+f(x,u)
\ee
on a bounded domain, where $\lam\in \R$ and $f$ satisfies an appropriate Landesman-Lazer type condition. It was shown that if $\mu_k$ is an eigenvalue of odd multiplicity, then \eqref{1.3} has at least two distinct solutions for $\lam$ on one side of $\mu_k$ but close to $\mu_k$, and at least one solution for $\lam$ on the other side. Later Schmitt and Wang \cite{SW} extended this result to the case when $\lam$
crosses each eigenvalue $\mu_k$ on
bifurcations from infinity for operator equations to cover the case of even multiplicity. Recently, Li, Li and Zhang \cite{LLZ} presented a dynamical argument for Schmitt and Wang's result on \eqref{1.3}. The interested reader is referred to \cite{BLL,CL,CMN,DO,FGP,LW,MS2,PM,PP,ST,TW}, etc., on bifurcations from infinity and multiplicity for elliptic equations under various boundary conditions.



The bifurcation from infinity of the equation \eqref{e1.1} on unbounded domains is also widely investigated; see \cite{CK,KS,G,G1,Stu,SZ} and the references
therein. However, comparing to the case of bounded domains, the problem on unbounded domains is more difficult. The main reason is that the spectrum of the operator $A:=-\Delta+V(x)$ is not discrete in general, and may be quite complicated, which depends on the potential. Under some additional conditions on the potential $V$ and $f$, Kryszewski and Szulkin \cite{KS}, using the degree theory and a variational approach, considered the asymptotic linear Schr\"odinger equation \eqref{e1.1}. They showed that if $\lam_0$ is an isolated eigenvalue for the linearization at infinity, then there exists a sequence $(u_n,\lam_n)$ of solutions of \eqref{e1.1} such that
$$\|u_n\|\ra\8\hs \mb{and \hs $\lam_n\ra\lam_0$,\Hs as \hs $n\ra\8$,}$$ which extends the results of Stuart \cite{Stu}.
Later, by using degree theory and Conley index theory, \'Cwiszewski and Kryszewski \cite{CK} studied the bifurcation from infinity of elliptic equations on $\R^N$, and  proved that if the bifurcation parameter is an eigenvalue of the Hamiltonian, then the bifurcation from infinity also occurs under some suitable conditions. Motivated by these works mentioned above, in this paper we further study the bifurcation from infinity and multiplicity of the Schr\"oding equation \eqref{e1.1} near an isolated eigenvalue $\lam_*$ of the operator $A=\Delta+V(x)$. We will show that under some appropriate conditions on $f$ and the potential $V$, there exists $\theta>0$ such that for each $\lam\in [\lam_*-\theta,\lam_*)$, \eqref{e1.1} has at least three distinct solutions $e_\lam^1,e_\lam^2$ and $e_\lam^3$ with
\be\label{ee}
\|e_\lam^i\|\ra \8,\hs \mb{as}\hs \lam\ra\lam_*^{-},\hs i=1,2,
\ee
whereas $e_\lam^3$ remains bounded on $[\lam_*-\theta,\lam_*)$. The ``dual" version of our results also holds true if, instead of \eqref{1.2}, we assume that
\be\label{1.5}
  \limsup\limits_{s\rightarrow+\infty}f(x,s)\leq-\ol{f}<0,\Hs
  \liminf\limits_{s\rightarrow-\infty}f(x,s)\geq\underline{f}>0
  \ee
uniformly with respect to $x\in\R^N$.

Our method in this work is arranged as follows. Instead of transforming \eqref{e1.1} into an operator equation and using the topological degree or variational methods, we regard the problem as the stationary one of the following parabolic equation
\be\label{e1.4}
u_t-\Delta u+V(x)u=\lam u+f(x,u),\hs x\in \R^N.
\ee
Specifically, let $\Phi_\lam$ be the semiflow generated by \eqref{e1.4} and
$\lam_*$ be an isolated eigenvalue of the operator $A=-\Delta+V(x)$. Assume that the Lipschitz constant $L_f$ of $f$ satisfies
$$F_{\mu}L_f<1$$
for $\mu\in (\frac{1}{4}\beta,\frac{3}{4}\beta)$ fixed, where $F_\mu$ and $\beta>0$ are given by \eqref{L} and \eqref{beta}, respectively.
We first present an existence result on a global invariant manifold $\cM$ of the equation \eqref{e1.4} for $\lam$ near $\lam_*$. 
Then the equation \eqref{e1.4} can be restricted to this invariant manifold $\cM$, and therefore it is reduced to a system of finite dimension. This allows us to apply the Conley index theory to the reduced system. By virtue of the shape theory of attractors \cite{KR}, it can be shown that there exists $\theta>0$ such that for each $\lam\in [\lam_*-\theta,\lam_*)$, the system \eqref{e1.4} can bifurcate from infinity a compact invariant set $\cS_\lam^\8$, which takes the shape of a sphere $\mathbb{S}^{m-1}$, where $m$ denotes the algebraic multiplicity of $\lam_*$. Based on this fact, one can further verify that $\cS_\lam^\8$ contains at least two distinct equilibria $e_\lam^1$ and $e_\lam^2$ of $\Phi_\lam$, which are actually solutions of \eqref{e1.1} satisfying \eqref{ee}.

Besides, using the Landesman-Lazer type condition \eqref{1.2}, we show a fundamental result on $f$ (see Lemma \ref{l4.1}), which plays a crucial role in establishing our main results for \eqref{e1.1}. Then we prove that \eqref{e1.1} has  a solution $e_\lam^3$ for $\lam$ belonging to a two-sided neighborhood $\Lam$ of $\lam_*$,
which remains bounded on $\Lam$.
We remark that this lemma extends the result established in \cite{LLZ,LSS} (see \cite[Lemma 5.2]{LLZ} or \cite[Lemma 6.7]{LSS}) to the case of unbounded domains, and can be seen as a nontrivial extension. This is because it is very hard to verify this result by directly using the techniques given in \cite{LLZ,LSS}. To avoid the difficulty caused by unbounded domains, we establish this lemma on some finite dimensional subspace of the phase space, which is inspired by the finite-dimension property of the invariant manifold.

It is worth mentioning that we treat the problem \eqref{e1.1} in the framework of dynamical systems, and deal with it via the approach of a pure dynamical nature, which is different from those in the literature. Moreover, the method that we restrict the system \eqref{e1.4} to the global invariant manifold can help us avoid the verification of the compactness for the semiflow $\Phi_\lam$. In general, it is not easy to verify that the semiflow is asymptotically compact for a parabolic equation on unbounded domains.


This work is organized as follows. Section 2 is concerned with some
preliminaries. In Section 3 we first establish a global invariant manifold of the equation \eqref{e1.4} on $\R^N$, and then we reduce \eqref{e1.4} to this invariant manifold. In Section 4, we present a detailed discussion on dynamical behaviors of the reduced system, and establish our main results on bifurcations from infinity and multiplicity of solutions of the Schr\"odinger equation \eqref{e1.1}.

\setcounter {equation}{0}
\section{Preliminaries}

In this section we first make some preliminaries.
\subsection{Local semiflows}
Let $X$ be a complete metric space.

A {\em local semiflow} $\Phi$ on $X$ is a continuous mapping from an
open set $\mathcal{D}(\Phi)\subset \mathbb{R}_+\times X$ to $X$ that
satisfies the following properties \benu
\item[{\rm(1)}] For every $x\in X$, there is $0<T_x\leq\infty$, such that
$$(t,x)\in\mathcal{D}(\Phi)\Longleftrightarrow t\in [0,T_x);$$
\item[{\rm(2)}] $\Phi(0,\cdot)={\rm id}_X$, and
$$\Phi(t+s,x)=\Phi(t,\Phi(s,x))$$
for any $x\in X$ and $t,s\in\R_+$ with $t+s\leq T_x$.
 \eenu
Here, the number $T_x$ is called the {\em escape time} of $\Phi(t,x)$. For simplicity, we rewrite $\Phi(t,x)$ as $\Phi(t)x$.

Given an interval $J\subset\R$. A continuous mapping $\gam:J\ra X$ is called a {\it trajectory} (or {\em solution}) of $\Phi$ on $J$
if
$$
  \gam(t)=\Phi(t-s)\gam(s),\hs \forall t,s\in J,\, t\geq s.
  $$
We call a trajectory $\gam$ on $\R$ a {\it full trajectory}. 
The orbit of a full trajectory $\gam$ is the set $${\rm orb}(\gam)=\{\gam(t):t\in\R\},$$ which is simply called {\it a full orbit}.

The {\em $\omega$-limit set} $\w(\gam)$ and {\em $\w^*$-limit set} $\w^*(\gam)$ of a full trajectory $\gam$ are defined, respectively, by
$$\ba{ll}
\w(\gamma)=\{y\in X:\,\,\,\,\mb{there exists }  t_n\ra \8 \mb{ such that }\gamma(t_n)\ra y\},\ea$$
$$
\ba{ll}
\w^*(\gamma)=\{y\in X:\,\,\,\,\mb{there exists }  t_n\ra -\8 \mb{ such that }\gamma(t_n)\ra y\}.\ea
$$

Let $N$ be a subset of $X$. We say that $\Phi$ {\em does not explode} in $N$, if $\Phi([0,T_x))x\subset N$ implies that  $T_x=\infty$.

\begin{defn}\label{defn2.3}\cite{Ry}\,
A set $N\subset X$ is said to be admissible, if for every sequences $x_n\in
N$ and $t_n\rightarrow\infty$ satisfying $\Phi([0,t_n])x_n\subset N$ for
all $n$, then the sequence $\Phi(t_n)x_n$ has a convergent subsequence.

A set $N$ is said to be strongly admissible, if it is admissible and
moreover, $\Phi$ does not explode in $N$.
\end{defn}
\begin{defn}\label{defn2.4}
$\Phi$ is said to be asymptotically compact  on $X$, if every bounded
set $B\subset X$ is strongly admissible.
\end{defn}
A set $S\ss X$ is said to be {\em positively invariant} (resp., {\em invariant}), if $\Phi(t)S\subset S$ (resp., $\Phi(t)S=S$) for any $t\geq0$.

A compact invariant set $\cA\ss X$ is called an {\em attractor} of  $\Phi$, if it attracts a neighborhood $U$ of itself, that is,
$$
\lim_{t\ra\8}\d_H\(\Phi(t)U,\cA\)=0.
$$
\subsection{Sectorial operators and spectral sets}

Let $X$ be a Banach space. A closed densely defined linear operator $A: D(A)\subset X \ra X$ is called a {\it sectorial operator}, if there are real numbers $\phi\in(0,\pi/2), a\in \mathbb{R}$ and $M\geq 1$ such that the sector
$$
  S_{a,\phi}=\{\lam: \phi\leq |{\rm arg}(\lam-a)|\leq \pi,\hs \lam\neq a\}
$$
is contained in the resolvent set of $A,$ and
$$
  \|(\lam I-A)^{-1}\|\leq M/|\lam-a|
  $$
for each $\lam\in S_{a,\phi}$, where $I$ denotes the identity on $X.$

Let $A$ be a sectorial operator on $X$. By $\sig(A)$ we denote the spectrum of $A.$ If $\min_{z\in \sig(A)}{\rm Re}\,z>0,$ then
one can define the fractional powers of $A$ as follows: for each $\a>0,$
$$
  A^{-\a}=\frac{1}{\Gam(\a)}\int^\infty_0t^{\a-1}\e^{-At}\d t,
$$
where $\ba{l}\Gam(s)=\int^\infty_0t^{s-1}\e^{-t}\d t\ea$ is the Gamma function. Let $A^\a$ be the inverse of $A^{-\a}$ with $D(A^\a)=R(A^{-\a}).$ 

Let $\a\geq 0$. Denote $X^\a=D(A_1^\a)$ and equip $X^\a$ with the norm $\|\cdot\|_\a$ defined by
$$
  \|u\|_\a=\|A^\a_1 u\|,\hs u\in X^\a,
$$
where $A_1=A+aI$, and $a$ is a real number such that $\min_{z\in \sig(A_1)}{\rm Re}z>0$.
It is easy to see that $X^\a$ is a Banach space, which is called the {\it fractional power of $X$}. For more notions and results on sectorial operators, the interested reader is referred to Henry \cite[Chapter I]{Hen} for details.




\vs
Now let us briefly recall some notions on spectral sets; see \cite{KS} and \cite{S}, etc., for details.

Let $X,Y$ be (real) Banach spaces. A closed densely defined linear operator $L:D(L)\subset X\ra Y$ is called a {\it Fredholm operator} if the range $R(L)$ is closed and $\dim N(L)<\infty, {\rm codim}\, R(L)<\infty,$ where $N(L)$ is the kernel of $L$.

\begin{defn}\label{2.1}
Let $E$ be a real Hilbert space and $L:D(L)\subset E \ra E$ be a self-adjoint linear operator. The essential spectrum $\sigma_e(L)$ of $L$ is defined by the set
$$
  \{\lam\in \mathbb{C}:L-\lam I\,\,\mb{ is not a Fredholm operator}\},
$$
\end{defn}
\begin{remark}\label{r2.1}
One may immediately deduce from the definition that $\sigma_e(L)\subset \sigma(L)$ and that $\sig(L)\setminus \sig_e(L)$ consists of isolated eigenvalues of finite multiplicity;  see also \cite{KS}.
\end{remark}

\subsection{Conley index}
For the readers' convenience, we finally recall the definition of Conley index; see
\cite{Con, Mis} and \cite{Ry}, etc., for details.

Let $\Phi$ be a local semiflow on $X$. From now on we will always assume $\Phi$ is asymptotically compact.

A compact invariant set $S$ is said to be {\em isolated}, if
there exists a neighborhood $N$ of $S$ such that $S$
is the maximal compact invariant set of $\Phi$ in $\ol N$.
Correspondingly, the set $N$ is called an
{\em isolating neighborhood} of $S$.



Let $B$ be a bounded closed subset of $X$. $x\in \pa B$ is called a {\em strict ingress} (resp., {\em strict egress}, {\em bounce-off}) point of $B$, if for each trajectory $\gamma:[-\tau,s]\ra X$ with $\gamma(0)=x$, where $\tau\geq0$, $s>0$, the following conditions are satisfied:
\vs
\noindent(1) there exists  $0<\ve<s$ such that
$$
\gamma(t)\in \mb{int}B\,\,\,(\mb{resp., }\, \gamma(t)\not\in B,\,\,\,\mb{resp., }\,\gamma(t)\not\in B),\Hs \A\,t\in (0,\ve);
$$
(2) if $\tau>0$, then there is a number $\de \in (0,\tau)$ such that
$$
\gamma(t)\not\in B \,\,\,(\mb{resp., }\, \gamma(t)\in \mb{int}B,\,\,\,\mb{resp., }\,\gamma(t)\not\in B),\Hs \A\,t\in (-\de, 0).
$$
By $B^i$ (resp., $B^e$, $B^b$), we denote the set of all strict ingress (resp., strict egress, bounce-off) points of the closed set $B$, and write $B^-=B^e\cup B^b$.

A set $B\subset X$ is called an {\em isolating block} \cite{Ry} if $B^-$ is closed and $
\pa B=B^i\cup B^-.$
\vs
Let $N,E$ be two closed subsets of $X$.  $E$ is called an {\em exit
set} of $N$, if the following properties hold:
\benu
\item[(1)] $E$ is {\em N-positively invariant}, that is, for each $x\in E$ and $t\geq 0$,
$$\Phi([0,t])x\subset N \Longrightarrow\Phi([0,t])x\subset E;$$
\item[(2)] for every $x\in N,$ if $\Phi(t_1)x\not\in N$ for some
$t_1>0$, then there is $t_0\in[0,t_1]$ such that $\Phi(t_0)\in E$.
\eenu

Let $S$ be a compact isolated invariant set of $\Phi$. A pair of bounded closed
subsets $(N,E)$ is said to be an {\em index pair} of $S$, if (1)\, $N\setminus E$ is an isolating neighborhood of
$S;$ (2)\, $E$ is an exit set of $N$.

By \cite{Ry}, one can deduce that if $B$ is a bounded isolating block, then $(B,B^-)$ is an index pair of the maximal compact invariant set $S$  in $B$.

\begin{defn}\label{defn2.8}
Let $(N,E)$ be an index pair of $S$. The homotopy Conley index
of $S$ is defined to be the homotopy type $[(N/E,[E])]$ of the
pointed space $(N/E,[E])$, denoted by $h(\Phi,S).$
\end{defn}
\setcounter {equation}{0}
\section{Invariant manifolds of nonlinear evolution equations}

In this section, we establish an existence result on global invariant manifolds of the following nonlinear evolution equation
\be\label{e3.1}
u_t+Au=\lam u+f(u)
\ee
on a Banach space $X$, where $A$ is a sectorial operator on $X$, $\lam\in \R$, and $f$ is a globally Lipschitz mapping from $X^\a$ to $X$ for some $0\leq \a<1$.

Let $\|\cdot\|$ and $\|\cdot\|_\a$ denote the norms of $X$ and $X^\a$, respectively.
\subsection{Prelimiaries}

Denote $\sig(A)$ the spectrum of $A$ and
$${\rm Re}\,\sig(A):=\{{\rm Re}\,z: z\in \sig(A)\}.$$
Suppose $\lam_0$ is an isolated eigenvalue of $A$ with finite multiplicity, and that the spectrum $\sigma(A)$ has a decomposition $\sig(A)=\sig^1\cup\sig^2\cup\sig^3$ with
$$
  \sig^1=\sig(A)\cap\{{\rm Re}\lam\leq \beta_1\},\hs \sig^2=\{\lam_0\},\hs \sig^3=\sig(A)\cap\{{\rm Re}\lam\geq \beta_2\},
$$ where $\beta_1,\beta_2$ are real numbers. It can be easily seen that
$$
  {\rm Re}\,\sig(A)\cap(\beta_1,\beta_2)=\{{\rm Re}\lam_0\}.
$$
Correspondingly, the space $X$ can be decomposed as
$$
  X=X^1\oplus X^2 \oplus X^3,
$$
and the space $X^2$ is finite dimensional. In addition, we assume the space $X^1$ is also finite dimensional. Denote
$$P_i: X \ra X^i,\, i\in\{1,2,3\}$$ the projection from $X$ to $X_i$.

Pick a positive number $\beta$ with
\be\label{beta}
  \beta<\min\{{\rm Re}\lam_0-\beta_1,\beta_2-{\rm Re}\lam_0\},
  \ee
and write $J=({\rm Re}\lam_0-\frac{1}{4}\beta,{\rm Re}\lam_0+\frac{1}{4}\beta).$ Let $A_\lam=A-\lam I, \lam\in J$ and denote $A^i=A_\lam|_{X^i}$. Then by the basic knowledge on sectorial operators (see e.g., \cite[Theorems 1.5.3, 1.5.4]{Hen}), one deduces that there exists $M>0$ (depending on $A$) such that for each $\a\in[0,1)$,
\begin{align}
  &\|\Lam^\a {\e}^{-A^1 t}\|\leq M\e^{\frac{3}{4}\beta t},\hs \|\e^{-A^1 t}\|\leq M\e^{\frac{3}{4}\beta t},\Hs t\leq 0,\label{e3.2}\\
  &\|\Lam^\a \e^{-A^2 t}\|\leq M\e^{\frac{1}{4}\beta |t|},\hs \|\e^{-A^2 t}\|\leq M\e^{\frac{1}{4}\beta |t|},\Hs t\in \R,\label{e3.3}\\
  &\|\Lam^\a \e^{-A^3 t}P_3\Lam^{-\a}\|\leq M\e^{-\frac{3}{4}\beta t},\hs \|\Lam^\a \e^{-A^3 t}\|\leq Mt^{-\a}\e^{-\frac{3}{4}\beta t},\Hs t>0,\label{e3.4}
\end{align}
where $\Lam=A+aI$, and $a$ is a positive number such that ${\rm Re} \,\sig(\Lam)>0$. The first estimates in \eqref{e3.2} and \eqref{e3.3} hold true as the spaces $X^1$ and $X^2$ are finite dimensional.

For a given $\mu\geq 0$, we define a space by
\be\label{aa}
  \cX_\mu=\big\{u\in \cC(\mathbb{R};X^\a):\sup\limits_{t\in \mathbb{R}}\e^{-\mu |t|}\|u(t)\|_\a<\infty\big\},
\ee
which is equipped with the following norm
$$
  \|u\|_{\cX_\mu}=\sup\limits_{t\in \mathbb{R}}\e^{-\mu |t|}\|u(t)\|_\a<\infty.
$$
Then it can be easily seen that $\cX_\mu$ is a Banach space.

Now we rewrite the equation \eqref{e3.1} as
\be\label{e3.5}
u_t+A_\lam u=f(u).
\ee
It is well-known that the Cauchy problem of \eqref{e3.5} is well-posed in $X^\a$; see e.g., \cite[Theorem 3.3.3]{Hen}. Specifically, for each $u_0\in X^\a,$ there exist $T>0$ and a (unique) continuous function $u:[0,T)\ra X^\a$ with $u(0)=u_0$ such that the equation \eqref{e3.5} is satisfied on $(0,T)$.

Denote $\Phi_\lam$ the semiflow generated by the equation \eqref{e3.5}.

Let us start with a fundamental result on the solution of \eqref{e3.5}.

\begin{lem}\label{l3.1}
Let $\mu\in(\frac{1}{4}\beta,\frac{3}{4}\beta)$. Assume $u\in \cX_\mu.$ Then $u$ is a full solution of \eqref{e3.5} if and only if $u$ satisfies
\begin{align}
 u(t)=&\e^{-A^2 t}P_2u(0)+\!\int^t_0\e^{-A^2(t-s)}P_2f(u(s)){\rm d}s+\!\int^t_{-\infty}\e^{-A^3(t-s)}P_3f(u(s)){\rm d}s\nonumber\\
 &-\int^\infty_t\e^{-A^1(t-s)}P_1f(u(s)){\rm d}s.\label{e3.6}
\end{align}
\end{lem}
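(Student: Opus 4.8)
The plan is to read \eqref{e3.6} as the Lyapunov--Perron integral equation attached to the spectral splitting $X=X^1\oplus X^2\oplus X^3$: along the unstable part $X^1$ one integrates backward from $+\8$, along the finite-dimensional center part $X^2$ forward from the base point $0$, and along the stable part $X^3$ forward from $-\8$. I would prove the two implications separately, preceded by a short observation that makes the right-hand side of \eqref{e3.6} meaningful. Namely, since $f:X^\a\ra X$ is globally Lipschitz and $u\in\cX_\mu$, one has $\|f(u(s))\|\leq\|f(0)\|+L_f\|u(s)\|_\a\leq C(1+\e^{\mu|s|})$ for all $s\in\R$; feeding this into \eqref{e3.2}--\eqref{e3.4}, the integrands in \eqref{e3.6} are dominated, in the $X^\a$-norm, by $CM\e^{\frac{3}{4}\beta(t-s)}(1+\e^{\mu s})$ on $\{s\geq t\}$ and by $CM(t-s)^{-\a}\e^{-\frac{3}{4}\beta(t-s)}(1+\e^{\mu|s|})$ on $\{s\leq t\}$. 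Because $\a<1$ tames the singularity at $s=t$ and $\mu<\frac{3}{4}\beta$ kills the tails at $\pm\8$, both improper integrals converge absolutely in $X^\a$, so the right-hand side defines a continuous $X^\a$-valued function of $t$.

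For the ``only if'' direction I would take a full solution $u\in\cX_\mu$ of \eqref{e3.5}, apply the projections $P_i$ (which commute with $\e^{-A_\lam t}$) to the variation-of-constants formula of $\Phi_\lam$, using base point $0$ on $X^2$, base point $T\geq t$ on $X^1$, and base point $\tau\leq t$ on $X^3$:
\begin{align}
P_2u(t)&=\e^{-A^2 t}P_2u(0)+\int_0^t\e^{-A^2(t-s)}P_2f(u(s))\,\d s,\nonumber\\
P_1u(t)&=\e^{-A^1(t-T)}P_1u(T)+\int_T^t\e^{-A^1(t-s)}P_1f(u(s))\,\d s,\nonumber\\
P_3u(t)&=\e^{-A^3(t-\tau)}P_3u(\tau)+\int_\tau^t\e^{-A^3(t-s)}P_3f(u(s))\,\d s.\nonumber
\end{align}
The first line is already the $X^2$-part of \eqref{e3.6}. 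Letting $T\ra+\8$ in the second line, \eqref{e3.2} and $u\in\cX_\mu$ give $\|\e^{-A^1(t-T)}P_1u(T)\|_\a\leq CM\|u\|_{\cX_\mu}\e^{\frac{3}{4}\beta t}\e^{(\mu-\frac{3}{4}\beta)T}\ra0$ since $\mu<\frac{3}{4}\beta$, while the integral converges to $-\int_t^\8$; letting $\tau\ra-\8$ in the third line, \eqref{e3.4} gives $\|\e^{-A^3(t-\tau)}P_3u(\tau)\|_\a\leq CM\|u\|_{\cX_\mu}\e^{-\frac{3}{4}\beta t}\e^{(\frac{3}{4}\beta-\mu)\tau}\ra0$. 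Adding the three resulting identities produces \eqref{e3.6}.

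For the ``if'' direction I would start from $u\in\cX_\mu$ satisfying \eqref{e3.6} and reverse the computation. Since $\e^{-A_\lam t}$ commutes with each $P_i$, it is enough to verify componentwise that $u(t)=\e^{-A_\lam(t-\tau)}u(\tau)+\int_\tau^t\e^{-A_\lam(t-s)}f(u(s))\,\d s$ for all $\tau\leq t$. On $X^2$ one applies $\e^{-A^2(t-\tau)}$ to the $X^2$-part of \eqref{e3.6} at time $\tau$ and uses $\int_0^t=\int_0^\tau+\int_\tau^t$; on $X^1$ one applies $\e^{-A^1(t-\tau)}$ to $P_1u(\tau)=-\int_\tau^\8\e^{-A^1(\tau-s)}P_1f(u(s))\,\d s$ and uses $-\int_\tau^\8+\int_\tau^t=-\int_t^\8$; on $X^3$ the analogous step converts $\int_{-\8}^\tau$ into $\int_{-\8}^t$. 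Summing the three identities yields the variation-of-constants formula, hence $u$ is a mild solution on every $[\tau,t]$ and therefore, by the well-posedness of \eqref{e3.5} in $X^\a$, $u(t)=\Phi_\lam(t-\tau)u(\tau)$ for all $\tau\leq t$; that is, $u$ is a full solution.

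The step I expect to need the most care is the preliminary one: estimating the convolution kernels so as to control simultaneously the $(t-s)^{-\a}$ singularity coming from \eqref{e3.4} and the exponential growth $\e^{\mu|s|}$ built into $\cX_\mu$, and checking that the strict inequality $\mu<\frac{3}{4}\beta$ is exactly what makes the boundary terms $\e^{-A^1(t-T)}P_1u(T)$ and $\e^{-A^3(t-\tau)}P_3u(\tau)$ disappear in the limits $T\ra+\8$ and $\tau\ra-\8$. Everything else reduces to bookkeeping with the variation-of-constants formula once the spectral projections decouple the three components.
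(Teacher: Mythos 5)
Your proposal is correct and follows essentially the same route as the paper: project the variation-of-constants formula onto $X^1,X^2,X^3$ with base points $T\geq t$, $0$, and $\tau\leq t$ respectively, and use \eqref{e3.2}, \eqref{e3.4} together with $\mu<\frac{3}{4}\beta$ to kill the boundary terms as $T\ra+\8$ and $\tau\ra-\8$; the converse is the same reversal the paper leaves to the reader. Your preliminary check that the improper integrals converge absolutely in $X^\a$ is a detail the paper omits but is consistent with its argument.
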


\begin{proof}
Assume $u\in \cX_\mu$. Let $u$ be a full solution of \eqref{e3.5}. Then
\be\label{e3.7}
  u(t)=u^1(t)+u^2(t)+u^3(t), \hs t\in\R,
\ee
where $u^i(t)=P_iu(t),i\in\{1,2,3\}.$ It is easy to see that $u^i(t)$ can be expressed, respectively, as
\begin{align}
  &u^2(t)=\e^{-A^2 t}u^2(0)+\int^t_{0}\e^{-A^2(t-s)}P_2f(u(s)){\rm d}s,\hs t\in \mathbb{R},\label{e3.8}\\
  &u^1(t)=\e^{-A^1 (t-t_0)}u^1(t_0)+\int^t_{t_0}\e^{-A^1(t-s)}P_1f(u(s)){\rm d}s,\hs t\leq t_0,\label{e3.9}\\
  &u^3(t)=\e^{-A^3 (t-t_0)}u^3(t_0)+\int^t_{t_0}\e^{-A^3(t-s)}P_3f(u(s)){\rm d}s,\hs t\geq t_0.\label{e3.10}
\end{align}
By \eqref{e3.2} (note that $t\leq t_0$), one deduces that
\begin{align*}
  \|\e^{-A^1 (t-t_0)}u^1(t_0)\|_\a
\leq& \,M\e^{\frac{3}{4}\beta(t-t_0)}\|u(t_0)\|_\a\\
=& \,M\e^{\frac{3}{4}\beta t}\e^{-(\frac{3}{4}\beta-\mu)t_0}\(\e^{-\mu t_0}\|u(t_0)\|_\a\)\\
\leq &\,M\e^{\frac{3}{4}\beta t}\e^{-(\frac{3}{4}\beta-\mu)t_0}\|u\|_{\cX_\mu}\ra 0\hs \mb{as}\hs t_0\ra \infty.
\end{align*}
Moreover, we infer from \eqref{e3.4} that
\begin{align*}
  \|\e^{-A^3 (t-t_0)}u^3(t_0)\|_\a
\leq& \,M\e^{-\frac{3}{4}\beta(t-t_0)}\|u(t_0)\|_\a\\
=& M\,\e^{-\frac{3}{4}\beta t}\e^{(\frac{3}{4}\beta-\mu)t_0}(\e^{\mu t_0}\|u(t_0)\|_\a)\\
\leq &\,M\e^{-\frac{3}{4}\beta t}\e^{(\frac{3}{4}\beta-\mu)t_0}\|u\|_{\cX_\mu}\ra 0\hs \mb{as}\hs t_0\ra -\infty.
\end{align*}
Letting $t_0\ra \infty$ and $t_0\ra -\infty$ in \eqref{e3.9}, \eqref{e3.10}, respectively, we conclude from \eqref{e3.7}-\eqref{e3.10} that the integral equation \eqref{e3.6} holds.
\vs
On the other hand, if \eqref{e3.6} holds, then one can easily check that $u(t)$ is a full solution of \eqref{e3.5}.
\end{proof}

\subsection{Existence of global invariant manifolds}
In this subsection, we state and prove the existence result of global invariant manifolds for the equation \eqref{e3.5}. For this purpose, we consider the space $\cX_{\mu}$ for $\mu\in (\frac{1}{4}\beta,\frac{3}{4}\beta)$ being fixed, which is defined by \eqref{aa}.

Henceforth we always assume that the following condition on $f$ holds true.
\vs
($\mathbf{A}$) The Lipschitz constant $L_f$ of $f$ satisfies
$$
  F_\mu L_f<1,
  $$
where
$$
  F_\mu=M\big[\int^\infty_0\e^{-(\mu-\frac{1}{4}\beta)s}\d s+\int^\infty_0(1+s^{-\a})\e^{-(\frac{3}{4}\beta-\mu)s}\d s\big].
$$
Set
$$
  X_i^\a=X^i\cap X^\a,\hs X_{i,j}^\a=X_i^\a\oplus X_j^\a,
$$
where $i,j=1,2,3,i\neq j$. Then we have the following result.

\begin{thm}\label{t3.1}
Let the assumption {\rm(}$\mathbf{A}${\rm)} hold and $\mu\in(\frac{1}{4}\beta,\frac{3}{4}\beta)$ be fixed. Then there exists a Lipschitz continuous mapping $\xi_\lam$ from $X^\a_2$ to $X^\a_{13}$ for $\lam\in J$, such that for each $\lam\in J,$ the system \eqref{e3.5} has a global invariant manifold $\cM_\lam$, which is represented as
$$
  \cM_\lam=\{w+\xi_\lam(w):w\in X^\a_2\}.
$$
\end{thm}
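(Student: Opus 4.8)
The plan is to run a Lyapunov--Perron fixed point scheme on the Banach space $\cX_\mu$, using the integral identity \eqref{e3.6} of Lemma \ref{l3.1} as a template. Fix $\lam\in J$ and a point $w\in X^\a_2$, and define $\cT_{\lam,w}:\cX_\mu\to\cX_\mu$ by
\begin{align*}
(\cT_{\lam,w}u)(t)=&\,\e^{-A^2 t}w+\int^t_0\e^{-A^2(t-s)}P_2f(u(s))\,\d s+\int^t_{-\infty}\e^{-A^3(t-s)}P_3f(u(s))\,\d s\\
&-\int^\infty_t\e^{-A^1(t-s)}P_1f(u(s))\,\d s,
\end{align*}
that is, the right-hand side of \eqref{e3.6} with $P_2u(0)$ replaced by the free parameter $w$. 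The first task is to show $\cT_{\lam,w}$ is well defined and maps $\cX_\mu$ into itself. Since $f$ is globally Lipschitz it grows at most linearly, $\|f(v)\|\le\|f(0)\|+L_f\|v\|_\a$, and for $u\in\cX_\mu$ one has $\|u(s)\|_\a\le\e^{\mu|s|}\|u\|_{\cX_\mu}$; feeding these bounds into the three integrals, estimating the propagators by \eqref{e3.2}--\eqref{e3.4}, and using $\e^{\mu|s|}\le\e^{\mu|t|}\e^{\mu|t-s|}$ together with $\frac14\beta<\mu<\frac34\beta$ and $\a<1$ makes every integral converge and gives $\sup_{t\in\R}\e^{-\mu|t|}\|(\cT_{\lam,w}u)(t)\|_\a<\infty$ (one treats $t\ge0$ and $t\le0$ separately in the $X^2$--term). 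Note that finite-dimensionality of $X^1$ is what removes a $t^{-\a}$ singularity from the $X^1$--propagator estimate in \eqref{e3.2}, so the backward integral over $[t,\infty)$ causes no integrability trouble.

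Next I would show $\cT_{\lam,w}$ is a contraction with constant independent of $\lam,w$. Taking a difference kills the linear term, and the same estimates applied to $\|f(u(s))-f(v(s))\|\le L_f\|u(s)-v(s)\|_\a\le L_f\e^{\mu|s|}\|u-v\|_{\cX_\mu}$, followed by the substitution $\sigma=|t-s|$, give
$$\|\cT_{\lam,w}u-\cT_{\lam,w}v\|_{\cX_\mu}\le F_\mu L_f\,\|u-v\|_{\cX_\mu},$$
where the term $\int_0^\infty\e^{-(\mu-\frac14\beta)s}\,\d s$ in $F_\mu$ arises from the $X^2$--integral and the term $\int_0^\infty(1+s^{-\a})\e^{-(\frac34\beta-\mu)s}\,\d s$ from the $X^1$-- and $X^3$--integrals. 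By assumption $(\mathbf A)$ we have $F_\mu L_f<1$, so $\cT_{\lam,w}$ has a unique fixed point $u_{\lam,w}\in\cX_\mu$; applying $P_2$ to the fixed point identity at $t=0$ gives $P_2u_{\lam,w}(0)=w$ (the cross projections vanish), so by Lemma \ref{l3.1} $u_{\lam,w}$ is the unique full solution of \eqref{e3.5} in $\cX_\mu$ with $P_2u_{\lam,w}(0)=w$. Set
$$\xi_\lam(w)=(P_1+P_3)\,u_{\lam,w}(0)\in X^\a_{13},\qquad\text{so that }w+\xi_\lam(w)=u_{\lam,w}(0).$$
Because $w$ enters $\cT_{\lam,w}$ only through $\e^{-A^2 t}w$, and $\e^{-\mu|t|}\|\e^{-A^2 t}(w_1-w_2)\|_\a\le M\e^{-(\mu-\frac14\beta)|t|}\|w_1-w_2\|_\a\le M\|w_1-w_2\|_\a$ by \eqref{e3.3}, the standard Lipschitz dependence of the fixed point of a uniform contraction on parameters yields $\|u_{\lam,w_1}-u_{\lam,w_2}\|_{\cX_\mu}\le\frac{M}{1-F_\mu L_f}\|w_1-w_2\|_\a$, and hence, using boundedness of $P_1,P_3$ on $X^\a$, that $\xi_\lam$ is Lipschitz from $X^\a_2$ to $X^\a_{13}$. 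In particular $\cM_\lam=\{w+\xi_\lam(w):w\in X^\a_2\}$ is a Lipschitz graph over the finite-dimensional space $X^\a_2$.

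It remains to prove $\cM_\lam$ is invariant under $\Phi_\lam$. The key point is translation invariance of $\cX_\mu$: if $\bar u=u_{\lam,w}$, then for any $\tau\in\R$ the translate $\bar u(\cdot+\tau)$ still lies in $\cX_\mu$ (since $\|\bar u(t+\tau)\|_\a\le\e^{\mu|\tau|}\e^{\mu|t|}\|\bar u\|_{\cX_\mu}$) and is still a full solution of \eqref{e3.5}, hence by uniqueness of the fixed point it equals $u_{\lam,P_2\bar u(\tau)}$, so $\bar u(\tau)=u_{\lam,P_2\bar u(\tau)}(0)\in\cM_\lam$ for every $\tau\in\R$. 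Now take $u_0=w+\xi_\lam(w)\in\cM_\lam$ and $\bar u=u_{\lam,w}$, so $\bar u(0)=u_0$. By uniqueness of mild solutions, $\Phi_\lam(t)u_0=\bar u(t)\in\cM_\lam$ for all $t\ge0$, while $\Phi_\lam(t)\bar u(-t)=\bar u(0)=u_0$ shows $u_0\in\Phi_\lam(t)\cM_\lam$; therefore $\Phi_\lam(t)\cM_\lam=\cM_\lam$, which is the asserted invariance. (As a by-product, $\cM_\lam$ is exactly the set of values at $t=0$ of the full solutions of \eqref{e3.5} in $\cX_\mu$.) I expect the \emph{main obstacle} to be the estimates of the first two paragraphs, that is, verifying simultaneously the self-mapping property and the contraction with precisely the constant $F_\mu L_f$: this requires carefully bookkeeping the three exponential weights on the regions $t\ge0$ and $t\le0$, absorbing the weak singularity $s^{-\a}$ coming from the $X^3$--propagator, and exploiting the finite-dimensionality of $X^1$ to make the integral over $[t,\infty)$ converge. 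Once these estimates are in hand, the Lipschitz continuity of $\xi_\lam$ and the invariance of $\cM_\lam$ are soft consequences of the contraction mapping principle with parameters and of the uniqueness of full solutions in $\cX_\mu$.
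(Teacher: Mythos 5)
Your proposal is correct and follows essentially the same Lyapunov--Perron fixed-point argument as the paper: the same integral operator on $\cX_\mu$, the same self-mapping and contraction estimates producing exactly the constant $F_\mu L_f$, and the same definition of $\xi_\lam$ with Lipschitz dependence obtained from the uniform contraction. The only difference is that you spell out the invariance of $\cM_\lam$ via time translation of full solutions in $\cX_\mu$ and uniqueness of the fixed point, a step the paper leaves implicit.
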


\begin{proof}
Let $\lam\in J$ and $w\in X^\a_2.$ We use the integral equation \eqref{e3.6} to define a mapping $G:=G_{\lam,w}$ on $\cX_{\mu}$ by
\begin{align*}
G(u)(t)=&\e^{-A^2 t}w+\!\int^t_0\e^{-A^2(t-s)}P_2f(u(s)){\rm d}s+\!\int^t_{-\infty}\e^{-A^3(t-s)}P_3f(u(s)){\rm d}s\\
 &-\int^\infty_t\e^{-A^1(t-s)}P_1f(u(s)){\rm d}s.
\end{align*}
Let us first verify that $G: \cX_{\mu}\ra \cX_{\mu}.$ Assume $u\in \cX_\mu.$
Since $f(u)$ is globally Lipschitz continuous, there exists $C_f>0$ such that
$$
  \|f(u)\|\leq C_f(\|u\|_\a+1).
$$
By \eqref{e3.2}-\eqref{e3.4}, we have
\begin{align}\label{e3.11}
\|G(u)\|_\a\leq& M\e^{\frac{1}{4}\beta|t|}\|w\|_\a+M\int^{t_2}_{t_1}\e^{\frac{1}{4}\beta|t-s|}C_f(\|u(s)\|_\a+1){\rm d}s\nonumber\\
&+M\int_{-\infty}^t(t-s)^{-\a} \e^{-\frac{3}{4}\beta(t-s)}C_f(\|u(s)\|_\a+1){\rm d}s\nonumber\\
&+M\int^\infty_t\e^{\frac{3}{4}\beta(t-s)}C_f(\|u(s)\|_\a+1){\rm d}s,
\end{align}
where
$$
  t_1:=\min\{t,0\},\hs t_2:=\max\{t,0\}.
  $$
In what follows we consider the second term on the right-hand side of the above inequality.
If $t\geq 0,$ then
\begin{align}\label{e3.12}
 &\e^{-\mu|t|}M\int^{t_2}_{t_1}\e^{\frac{1}{4}\beta|t-s|}C_f(\|u(s)\|_\a+1){\rm d}s\nonumber \\
 =&\e^{-\mu t}M\int^{t}_{0}\e^{\frac{1}{4}\beta(t-s)}C_f(\|u(s)\|_\a+1){\rm d}s\nonumber\\
=&MC_f\int^{t}_{0}\e^{-(\mu-\frac{1}{4}\beta)(t-s)}\e^{-\mu s}(\|u(s)\|_\a+1){\rm d}s\nonumber\\
\leq& MC_f\int^{t}_{0}\e^{-(\mu-\frac{1}{4}\beta)(t-s)}\(\e^{-\mu s}\|u(s)\|_\a+1\){\rm d}s.
\end{align}
Similarly, for $t\leq 0,$ we have
\begin{align}\label{e3.13}
 & \e^{-\mu|t|}M\int^{t_2}_{t_1}\e^{\frac{1}{4}\beta|t-s|}C_f(\|u(s)\|_\a+1){\rm d}s\nonumber \\
 =&\e^{\mu t}M\int^{0}_{t}\e^{\frac{1}{4}\beta(s-t)}C_f(\|u(s)\|_\a+1){\rm d}s\nonumber\\
=&MC_f\int^{0}_{t}\e^{-(\mu-\frac{1}{4}\beta)(s-t)}\e^{\mu s}(\|u(s)\|_\a+1){\rm d}s\nonumber\\
\leq& MC_f\int^{0}_{t}\e^{-(\mu-\frac{1}{4}\beta)(s-t)}\(\e^{\mu s}\|u(s)\|_\a+1\){\rm d}s.
\end{align}
Thus it follows from \eqref{e3.12} and \eqref{e3.13} that for $t\in \R$,
\begin{align}\label{e3.14}
 &\e^{-\mu|t|}M\int^{t_2}_{t_1}\e^{\frac{1}{4}\beta|t-s|}C_f(\|u(s)\|_\a+1){\rm d}s\nonumber\\
\leq& M C_f\int^{t_2}_{t_1}\e^{-(\mu-\frac{1}{4}\beta)|t-s|}\big(\e^{-\mu|s|}\|u(s)\|_\a+1\big){\rm d}s\nonumber \\
\leq &M C_f\int^{\8}_{0}\e^{-(\mu-\frac{1}{4}\beta)s}{\rm d}s\cdot(\|u\|_{\cX_\mu}+1).
\end{align}
We observe that
\begin{align*}
 \e^{-\mu|t|}=\e^{-\mu|t-s+s|}\leq \e^{-\mu|s|}\e^{\mu|t-s|},\hs t\in \R,
\end{align*}
from which it can be seen that 
\begin{align}\label{e3.15}
&\e^{-\mu|t|}M\int_{-\infty}^t(t-s)^{-\a} \e^{-\frac{3}{4}\beta(t-s)}C_f(\|u(s)\|_\a+1)\d s\nonumber \\
\leq&
M C_f\int_{-\infty}^t(t-s)^{-\a} \e^{\mu|t-s|}\e^{-\frac{3}{4}\beta(t-s)}\big[\e^{-\mu|s|}(\|u(s)\|_\a+1)\big]\d s\nonumber \\
\leq&M C_f\int_{-\infty}^t(t-s)^{-\a} \e^{-(\frac{3}{4}\beta-\mu)(t-s)}\(\e^{-\mu|s|}\|u(s)\|_\a+1\)\d s\nonumber \\
\leq &M C_f\int^\infty_0s^{-\a}\e^{-(\frac{3}{4}\beta-\mu)s}\d s\cdot(\|u\|_{\cX_\mu}+1),
\end{align}
and
\begin{align}\label{e3.16}
&\e^{-\mu|t|}M\int^\infty_t\e^{\frac{3}{4}\beta(t-s)}C_f(\|u(s)\|_\a+1)\d s \nonumber \\
\leq &M C_f\int^\infty_t\e^{\mu|t-s|}\e^{\frac{3}{4}\beta(t-s)}\e^{-\mu|s|}\(\|u(s)\|_\a+1\)\d s \nonumber \\
\leq &M C_f\int^\infty_t\e^{(\frac{3}{4}\beta-\mu)(t-s)}\(\e^{-\mu|s|}\|u(s)\|_\a+1\)\d s\nonumber \\
\leq &M C_f\int^\infty_0\e^{-(\frac{3}{4}\beta-\mu)s}{\d} s\cdot(\|u\|_{\cX_\mu}+1).
\end{align}
Thus one can conclude from \eqref{e3.11}, \eqref{e3.14}, \eqref{e3.15} and \eqref{e3.16} that
\begin{align*}
\e^{-\mu|t|}\|G(u)\|_\a
\leq& M\e^{-(\mu-\frac{1}{4}\beta)|t|}\|w\|_\a+\e^{-\mu|t|}M\int^{t_2}_{t_1}\e^{\frac{1}{4}\beta|t-s|}C_f(\|u(s)\|_\a+1){\rm d}s\nonumber\\
&+\e^{-\mu|t|}M\int_{-\infty}^t(t-s)^{-\a} \e^{-\frac{3}{4}\beta(t-s)}C_f(\|u(s)\|_\a+1){\rm d}s\nonumber\\
&+\e^{-\mu|t|}M\int^\infty_t\e^{\frac{3}{4}\beta(t-s)}C_f(\|u(s)\|_\a+1){\rm d}s\nonumber \\
\leq&M\|w\|_\a+M C_f\int^{\8}_{0}\e^{-(\mu-\frac{1}{4}\beta)s}{\rm d}s\cdot(\|u\|_{\cX_\mu}+1)\nonumber\\
&+M C_f\int^\infty_0s^{-\a}\e^{-(\frac{3}{4}\beta-\mu)s}\d s\cdot(\|u\|_{\cX_\mu}+1)\nonumber \\
&+M C_f\int^\infty_0\e^{-(\frac{3}{4}\beta-\mu)s}{\d} s\cdot(\|u\|_{\cX_\mu}+1)\nonumber \\
\leq& M\|w\|_\a+F_\mu C_f(\|u\|_{\cX_\mu}+1)<\infty,\hs \forall t\in \mathbb{R}.
\end{align*}
%
That is $\|Gu\|_{\cX_\mu}<\infty,$ which implies $Gu\in \cX_\mu.$
\vs
Next we show that $G$ is a contraction mapping on $\cX_\mu.$ Indeed, for each $u_1,u_2\in \cX_\mu,$ we have
\begin{align}\label{e3.17}
\e^{-\mu|t|}\|G(u_1)-G(u_2)\|_\a
&\leq \e^{-\mu|t|}\|\int^{t}_{0}\e^{-A^2(t-s)}P_2\big(f(u_1)-f(u_2)\big)\d s\|_\a\nonumber\\
&\, +\e^{-\mu|t|}\|\int^t_{-\infty}\!\e^{-A^3(t-s)}P_3\big(f(u_1)-f(u_2)\big){\rm d}s\|_\a\nonumber \\
&\, +\e^{-\mu|t|}\|\int^\infty_t\!\e^{-A^1(t-s)}P_1\big(f(u_1)-f(u_2)\big)\d s\|_\a.
\end{align}
Similar to the derivation of \eqref{e3.14}, it can be easily shown that
\begin{align}\label{e3.18}
&\e^{-\mu|t|}\|\int^{t}_{0}\e^{-A^2(t-s)}P_2\big(f(u_1)-f(u_2)\big)\d s\|_\a\nonumber\\
\leq& \,ML_f\int^{t_2}_{t_1}\e^{-(\mu-\frac{1}{4}\beta)|t-s|}\big(\e^{-\mu|s|}\|u_1-u_2\|_\a\big)\d s.
\end{align}
By some computations, we obtain that
\begin{align}
&\e^{-\mu|t|}\|\int^t_{-\infty}\e^{-A^3(t-s)}P_3\big(f(u_1)-f(u_2)\big){\rm d}s\|_\a\nonumber \\
\leq &\,ML_f\int^t_{-\infty}(t-s)^{-\a}\e^{-(\frac{3}{4}\beta-\mu)(t-s)}\big(\e^{-\mu|s|}\|u_1-u_2\|_\a\big)\d s,\label{e3.19}\\
&\e^{-\mu|t|}\|\int^\infty_t\e^{-A^1(t-s)}P_1(f(u_1)-f(u_2))\d s\|_\a\nonumber \\
\leq& \, ML_f\int_t^{\infty}\e^{(\frac{3}{4}\beta-\mu)(t-s)}\big(\e^{-\mu|s|}\|u_1-u_2\|_\a\big)\d s.\label{e3.20}
\end{align}
Then it follows from \eqref{e3.17}-\eqref{e3.20} that
\begin{align}\label{e3.21}
\e^{-\mu|t|}\|Gu_1-Gu_2\|_{\a}&\leq ML_f\int^{t_2}_{t_1}\e^{-(\mu-\frac{1}{4}\beta)|t-s|}\big(\e^{-\mu|s|}\|u_1-u_2\|_\a\big)\d s\nonumber\\
&+ML_f\int^t_{-\infty}(t-s)^{-\a}\e^{-(\frac{3}{4}\beta-\mu)(t-s)}\!\big(\e^{-\mu|s|}\|u_1-u_2\|_\a\big)\d s\nonumber\\
&+ML_f\int_t^{\infty}\e^{(\frac{3}{4}\beta-\mu)(t-s)}\big(\e^{-\mu|s|}\|u_1-u_2\|_\a\big)\d s\nonumber\\
&\leq F_\mu L_f\|u_1-u_2\|_{\cX_\mu}.
\end{align}
Therefore
$$\|Gu_1-Gu_2\|_{\cX_\mu}\leq F_\mu L_f\|u_1-u_2\|_{\cX_\mu}.$$
That is, $G$ is a contraction mapping on $\cX_\mu$.

Now by virtue of the Banach contraction mapping principle, $G$ has a fixed point $u_w:=u_{\lam,w}\in \cX_\mu,$ which is a full solution of \eqref{e3.5} with $P_2u_w(0)=w$ and satisfies
\begin{align}\label{e3.22}
   u_w(t)=&\e^{-A^2 t}w+\!\int^t_0\e^{-A^2(t-s)}P_2f(u_w(s))\d s+\!\int^t_{-\infty}\e^{-A^3(t-s)}P_3f(u_w(s))\d s\nonumber\\
 &-\int^\infty_t\e^{-A^1(t-s)}P_1f(u_w(s))\d s.
\end{align}
Define
$$
\Gam(w)=u_w(0),\hs w\in X^\a_2.
$$
We claim that $\Gam(w)$ is Lipschitz continuous with respect to $w$. Indeed, let $w_1,w_2\in X_2^\a$. Then by \eqref{e3.22} and using the same estimation of \eqref{e3.17}, we obtain that
\begin{align*}
  &\e^{-\mu|t|}\|u_{w_1}(t)-u_{w_2}(t)\|_\a\\
  \leq&\, M\e^{-(\mu-\frac{1}{4}\beta)|t|}\|w_1-w_2\|_\a +ML_f\int^{t_2}_{t_1}\e^{-(\mu-\frac{1}{4}\beta)|t-s|}\big(\e^{-\mu|s|}\|u_{w_1}-u_{w_2}\|_\a\big) \d s\\
&+ML_f\int^t_{-\infty}(t-s)^{-\a}\e^{-(\frac{3}{4}\beta-\mu)(t-s)}\big(\e^{-\mu|s|}\|u_{w_1}-u_{w_2}\|_\a\big)\d s\\
&+ML_f\int_t^{\infty}\e^{(\frac{3}{4}\beta-\mu)(t-s)}\big(\e^{-\mu|s|}\|u_{w_1}-u_{w_2}\|_\a\big)\d s\\
\leq&\, M\|w_1-w_2\|_\a+F_\mu L_f\|u_{w_1}-u_{w_2}\|_{\cX_\mu},\hs t\in \R,
\end{align*}
from which it can be seen that
$$\|u_{w_1}-u_{w_2}\|_{\cX_\mu}\leq M\|w_1-w_2\|_\a+F_\mu L_f\|u_{w_1}-u_{w_2}\|_{\cX_\mu}.$$
Thus
\begin{align*}
\|\Gam(w_1)-\Gam(w_2)\|_\a&=\|u_{w_1}(0)-u_{w_2}(0)\|_\a\leq \|u_{w_1}(t)-u_{w_2}(t)\|_{\cX_\mu}\\[1ex]
&\leq M(1-F_\mu L_f)^{-1}\|w_1-w_2\|_\a,
\end{align*}
which shows that $\Gam(w)$ is Lipschitz continuous in $w$.

Define a mapping $\xi_\lam:X_2^\a\ra X^\a_{13}$ by
\begin{align}\label{e3.23}
\xi_\lam(w)=\int^0_{-\infty}\e^{A^3s}P_3f(u_w(s))\d s
 -\int^\infty_0\e^{A^1s}P_1f(u_w(s))\d s, \hs w\in X^\a_2.
\end{align}
Let $t=0$ in \eqref{e3.22}. Then we have
\be\label{e3.24}
\Gam(w)=u_w(0)=w+\xi_\lam(w),\hs w\in X^\a_2.
\ee
Clearly, one can deduce from \eqref{e3.24} that $\xi_\lam:X^\a_2\ra X^\a_{13}$ is Lipschitz continuous in $w.$

Set
$$
  \cM_\lam=\{w+\xi_\lam(w):w\in X^\a_2\}.
  $$
Then $\cM_\lam$ is a global invariant manifold for the semiflow generated by \eqref{e3.5}.
\end{proof}

\begin{remark}\label{r3.1}
By \eqref{e3.23}, one can easily deduce that if $f$ is bounded, then there exists $M_f>0$ depending on $f$ such that
$$
  \|\xi_\lam(w)\|_\a\leq MM_f\int_0^{\infty}(1+s^{-\a})\e^{-\frac{3}{4}\beta s}\d s, \hs w\in X^\a_2,
$$
that is, $\xi_\lam$ is bounded.
\end{remark}

\setcounter {equation}{0}
\section{Bifurcation from infinity of Schr\"odinger equations}

In this section we study the bifurcation from infinity of the Schr\"odinger equation \eqref{e1.1}.
For this purpose, let us first consider the following parabolic equation
\be\label{e4.2}\left\{\ba{ll}
 u_t-\Delta u+V(x)u=\lam u+f(x,u),\hs x\in \mathbb{R}^N;\\[1ex]
u(0,x)=u_0(x),\hs x\in \mathbb{R}^N.
\ea\right.\ee
where $\lam\in \mathbb{R}$, $V\in L^\infty(\mathbb{R}^N), N\geq 1$, and $f$ is bounded and satisfies the Landesman-Lazer type condition \eqref{1.2}.
We will apply the global invariant manifold established in Section 3 to discuss the dynamic bifurcation from infinity of \eqref{e4.2}.

\subsection{Mathematical setting}



Let $X=L^2(\mathbb{R}^N)$ and $Y=H^1(\mathbb{R}^N)$. Denote $(\cdot,\cdot)$
and $|\cdot|$ the usual inner product and norm on $X$,
respectively. The norm $\|\cdot\|$ on $Y$ is defined by
$$
  \|u\|=\big(\int_{\mathbb{R}^N}|\nabla u|^2\mathrm{d}x+\int_{\mathbb{R}^N}V(x)|u|^2\mathrm{d}x\big)^{1/2},\Hs
  u\in Y.
$$
For simplicity, from now on we use $\mathrm{B}_X(R)$ and $\mathrm{B}_Y(R)$ to denote
the balls in $X$ and $Y$, centered at $0$ with radius $R$, respectively.


\vs
In the following we assume that the conditions on the potential $V$ and nonlinearity $f$ hold true.
 \benu
\item[($\mathbf{A1}$)] There exist positive numbers $a_1,a_2$ and $V_\infty$ such that
$$
  0<a_1\leq V(x)\leq a_2,\hs 0<V_\infty:=\sup_{x\in\R^N}V(x)=\lim\limits_{|x|\ra \infty}V(x)<\infty.
$$
\item[($\mathbf{A2}$)] The nonlinear term $f:\R^N\X \R \ra \R$ satisfies
$$
 |f(x,s)-f(x,t)|\leq l(x)|s-t|,\Hs \forall s,t\in \R, \hs x\in \R^N,
$$
where $l(x)=l_1(x)+l_2(x)$, $l_1\in L^\8(\R^N)$, and $l_2$ satisfies
$$l_2\in L^p(\R^N),\hs\mb{$p\geq 2$ if $N=1$,\, $p>2$ if $N=2$, \,$p\geq N$ if $N\geq 3$.}$$
\item[($\mathbf{A3}$)] The function $f$ satisfies the Landesman-Lazer type condition \eqref{1.3} in Section 1, and
$$
  |f(x,s)|\leq g(x), \Hs \A s\in\R,\hs x\in \R^N
$$
for some function $g\in L^2(\R^N)\cap L^\8(\R^N)$.
\eenu

Denote by $A$ the operator $-\Delta+V(x)$.
Then under the assumptions on $V(x),$ the Schr\"odinger operator $A=-\Delta+V(x):H^2(\mathbb{R}^N)\ra L^2(\mathbb{R}^N)$ is selfadjoint
and bounded from below.
Furthermore, it follows from \cite{BS,RS,WX} that the interval $[V_\infty,\infty)$ is the essential spectrum of $A$ and the discrete spectrum of $A$ on the interval $(-\8,V_\8)$ appears. Namely, for each $a<V_\8$, $\sig(A)\cap(-\8,a)$ consists of at most finitely many eigenvalues of $A$. Hence 
$$
  \sig(A)=\sig_p\cup\sig_e,
  $$
where $\sig_p$ consists of isolated eigenvalues with finite multiplicity, and $\sig_e=[V_\infty,\infty)$ is the essential spectrum.

Now we convert \eqref{e4.2} into an abstract form on $Y$
\begin{align}\label{4.3}
 u_t+Au=&\lam u+\tilde{f}(u), \\
 u(0)=&u_0,\label{4.4}
\end{align}
where $\tilde{f}(u)$ is the Nemitski operator from $Y$ to $X$ defined by
$$
  \tilde{f}(u)(x)=f(x,u),\Hs u\in Y.
$$
It is well known that the Nemitski operator  $\tilde{f}$ is well defined. Moreover, by ($\mathbf{A2}$), one can easily verify that $\tilde{f}$ is Lipschitz continuous with a chosen Lipschitz constant denoted by $L_f$.
Note that $A$ is a sectorial operator. Then the Cauchy problem of \eqref{4.3}-\eqref{4.4} is well-posed; see \cite[Theorem 3.3.3]{Hen}.
Denote $\Phi_{\lam}$ the semiflow generated by \eqref{4.3}-\eqref{4.4}. Then $\Phi_\lam$ is a global semiflow on $Y$.

Let $A_{\lam_*}=A-\lam_*$, where $\lam_*\in \sig_p$. Then the space $X$ can be
decomposed into the orthogonal direct sum of its subspaces
$X^1,X^2,X^3$ corresponding to the negative, zero and positive
eigenvalues of $A_{\lam_*}$, respectively. Denote $P_\sig (\sig\in \{1,2,3\})$ the projection from $X$ to $X^\sig.$ It can be easily seen that $X^1$
and $X^2$ are finite-dimensional.

Set
$$
  Y^\sigma=Y\cap X^\sigma,\Hs \sigma\in\{1,2,3\}.
  $$
Then from the finite dimensionality of $X^1$ and $X^2$, one
concludes that $Y^1$ and $Y^2$ coincide with $X^1$ and $X^2$,
respectively.
Also, it holds that
$$
  Y=Y^1\oplus Y^2\oplus Y^3,
  $$
and $m:=\dim Y^2\geq 1$.
Since $\lam_*$ is an isolated eigenvalue of $A$, one can take the same positive number $\beta$ as that in Section 3.1.
Let $J=(\lam_*-\frac{1}{4}\beta,\lam_*+\frac{1}{4}\beta).$

Henceforth we always assume that the Lipschitz constant $L_f$ of $f$ satisfies the following condition.
\vs
({\bf F}) For each fixed $\mu\in (\frac{1}{4}\beta,\frac{3}{4}\beta)$,
$$
  F_{\mu}L_f<1,
$$
where
\be\label{L}
  F_{\mu}=M\big(\int^\infty_0\e^{-(\mu-\frac{1}{4}\beta)s}\d s
  +\int^\infty_0(1+s^{-\frac{1}{2}})\e^{-(\frac{3}{4}\beta-\mu)s}\d s\big),
\ee
and $M$ is a positive number depending on $A$.
Then one can immediately conclude from Theorem 3.1 that the following result holds true.

\begin{thm}\label{t4.1}
Let the assumptions {\rm($\mathbf{A1}$)}-{\rm($\mathbf{A3}$)} and {\rm ($\mathbf{F}$)} hold. Then there exists a Lipschitz continuous mapping $\xi_\lam$ from $Y^2$ to $Y^{13}$ for $\lam\in J$, such that for each $\lam\in J,$ the system \eqref{4.3}-\eqref{4.4} has a global invariant manifold $\cM_\lam$, which is represented as
$$
  \cM_\lam=\{w+\xi_\lam(w):w\in Y^2\},
$$
where $Y^{13}=Y^1\oplus Y^3$.
\end{thm}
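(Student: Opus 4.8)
The plan is to deduce Theorem~\ref{t4.1} directly from the abstract Theorem~\ref{t3.1} by checking that the concrete setup of Section~4 fits the abstract framework of Section~3. First I would identify the data: take the Banach space to be $X=L^2(\R^N)$, the sectorial operator to be $A=-\Delta+V(x)$, the fractional power exponent to be $\a=\frac12$ so that $X^{1/2}=H^1(\R^N)=Y$ (this is the standard identification of fractional power spaces for a Schr\"odinger operator with a bounded-below potential, using \cite[Chapter I]{Hen}), and the nonlinearity to be the Nemitski operator $\tilde f$, which by assumption $(\mathbf{A2})$ is globally Lipschitz from $Y=X^{1/2}$ to $X$ with constant $L_f$. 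Thus condition $(\mathbf{A})$ of Section~3 becomes exactly the hypothesis $(\mathbf{F})$ here, since $F_\mu$ in \eqref{L} is precisely the quantity $F_\mu$ from Section~3.2 with $\a=\frac12$.

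Next I would verify the spectral decomposition hypothesis. The paper has already noted, via \cite{BS,RS,WX}, that $\sigma(A)=\sigma_p\cup\sigma_e$ with $\sigma_e=[V_\infty,\infty)$ and $\sigma_p$ consisting of isolated eigenvalues of finite multiplicity below $V_\infty$. Since $\lam_*\in\sigma_p$, it is an isolated eigenvalue of finite multiplicity, so one may choose real numbers $\beta_1<\lam_*<\beta_2$ with $\sigma(A)\cap[\beta_1,\beta_2]=\{\lam_*\}$; this gives the required splitting $\sigma(A)=\sigma^1\cup\sigma^2\cup\sigma^3$ with $\sigma^2=\{\lam_*\}$. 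The crucial extra structural assumption in Theorem~\ref{t3.1} — that both $X^1$ and $X^2$ are finite dimensional — holds here: $X^2$ is the (finite-dimensional) eigenspace of $\lam_*$, and $X^1$, being the spectral subspace for $\sigma(A)\cap\{\mathrm{Re}\,\lam\le\beta_1\}$, is a sum of finitely many finite-multiplicity eigenspaces because the spectrum of $A$ below $V_\infty$ is discrete and there are only finitely many eigenvalues below any $a<V_\infty$. The self-adjointness of $A$ makes the spectral projections $P_1,P_2,P_3$ orthogonal, matching the description in Section~4.1, and it gives $Y^\sigma=Y\cap X^\sigma$ with $Y^1=X^1$, $Y^2=X^2$ finite-dimensional, so that $X^\a_2=Y^2$ and $X^\a_{13}=Y^{13}$ under the identification $X^{1/2}=Y$.

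With these identifications in place, I would simply invoke Theorem~\ref{t3.1} for each $\lam\in J=(\lam_*-\tfrac14\beta,\lam_*+\tfrac14\beta)$: there exists a Lipschitz continuous $\xi_\lam:X^\a_2\to X^\a_{13}$, i.e.\ $\xi_\lam:Y^2\to Y^{13}$, such that $\cM_\lam=\{w+\xi_\lam(w):w\in Y^2\}$ is a global invariant manifold for the semiflow generated by \eqref{e3.5}, which in the present notation is exactly \eqref{4.3}--\eqref{4.4}. That completes the proof.

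The main obstacle — really the only nontrivial point — is the verification that $X^{1/2}=D(A_1^{1/2})$ coincides with $Y=H^1(\R^N)$ as a normed space (with equivalence of norms), and relatedly that the semiflow $\Phi_\lam$ of Section~4 is literally the semiflow of the abstract equation \eqref{e3.5}. Given $(\mathbf{A1})$, $A$ is self-adjoint and bounded below with $A\ge a_1>0$, so $A_1=A$ (one may take $a=0$), and the form domain of $A$ is $H^1(\R^N)$ with $\|A^{1/2}u\|_{L^2}^2=\int|\nabla u|^2+\int V|u|^2=\|u\|_Y^2$; this is exactly the norm on $Y$ fixed in Section~4.1, so the identification is in fact an isometry and there is nothing to lose in constants. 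Everything else is bookkeeping: matching $(\mathbf{F})$ to $(\mathbf{A})$, matching $\beta$, and matching the finite-dimensionality hypotheses, all of which the paper has already arranged in the surrounding text.
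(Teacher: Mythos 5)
Your proposal is correct and follows exactly the route the paper takes: the paper gives no separate proof of Theorem \ref{t4.1} beyond the remark that it follows "immediately" from Theorem \ref{t3.1}, and your verification of the hypotheses (the identification $X^{1/2}=Y=H^1(\R^N)$ via the form of $A$, the discreteness of $\sigma(A)$ below $V_\infty$ giving finite-dimensional $X^1$ and $X^2$, and the matching of {\rm ({\bf F})} with {\rm($\mathbf{A}$)} at $\a=\tfrac12$) supplies precisely the bookkeeping the paper leaves implicit.
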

\subsection{Preliminaries}

Given a function $w$ on $\R^N$, we use $w_{\pm}$ to denote the
positive and negative parts of $w$, respectively. Specifically,
$$
  w_{\pm}=\max\{\pm w(x),0\}, \Hs x\in \R^N.
  $$
Then $w=w_+-w_-.$
 \vs

\begin{lem}\label{l4.1} Let the assumption {\rm($\mathbf{A3}$)} hold. Then
for any $R>0,\varepsilon>0,$ there exists $s_0>0$ such that if $s\geq s_0$,
$$
  \int_{\mathbb{R}^N}f(x,v+sw)w{\rm d}x\geq
  \frac{1}{2}\int_{\mathbb{R}^N}(\bar{f}w_++\underline{f}w_-){\rm d}x-\varepsilon.
  $$
for all  $v\in {\rm\bar{B}}_X(R)$ and $w\in {\rm\bar{B}}_{X^2}(1),$ where $X^2$ is a finite-dimensional Banach space.
\end{lem}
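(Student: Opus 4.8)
I propose to prove Lemma~\ref{l4.1} by \emph{localising} the integral. The idea is to split $\R^N$ into a ``core'' piece on which, for $s$ large, the Landesman--Lazer condition \eqref{1.2} forces $f(x,v+sw)\,w(x)$ to be close to $\bar f w_++\underline f w_-$, together with three ``remainder'' pieces whose contribution is negligible \emph{uniformly} over $v\in\overline{\mathrm{B}}_X(R)$ and $w\in\overline{\mathrm{B}}_{X^2}(1)$. The finite dimensionality of $X^2$ is the essential structural input: fixing an $L^2$-orthonormal basis $\phi_1,\dots,\phi_m$ of $X^2$ (being eigenfunctions of $A$ below the essential spectrum, the $\phi_j$ lie in $L^1(\R^N)\cap L^\infty(\R^N)$), every $w=\sum_j c_j\phi_j\in\overline{\mathrm{B}}_{X^2}(1)$ satisfies $|c_j|\le 1$, hence $\|w\|_{L^\infty}\le C_0$, $\|w\|_{L^1}\le C_1$, and $\int_{|x|>\rho}|w|\,\d x\le\tau(\rho)\to 0$ as $\rho\to\infty$, with $C_0$, $C_1$ and $\tau(\cdot)$ independent of $w$ (in particular the right-hand side of the claimed inequality is finite).

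Given $R,\varepsilon>0$, I would fix constants in the following order. First choose $\rho$ so large that, using $g\in L^2\cap L^\infty$ and $|f(x,\cdot)|\le g(x)$, both $\int_{|x|>\rho}|f(x,v+sw)\,w|\,\d x\le\|g\|_{L^2(\{|x|>\rho\})}\,\|w\|_{L^2}$ and $\frac12\int_{|x|>\rho}(\bar f w_++\underline f w_-)\,\d x\le\frac12\max\{\bar f,\underline f\}\,\tau(\rho)$ are as small as we please; the asserted inequality then holds on $\{|x|>\rho\}$ for \emph{every} $s$, so it remains to argue on the fixed ball $\{|x|\le\rho\}$. On that ball, choose $\delta>0$ so small that the part of both integrals over $\{|w|\le\delta\}$ is small (using $|f|\le g$ and $\bar f w_++\underline f w_-\le\max\{\bar f,\underline f\}\,|w|\le\max\{\bar f,\underline f\}\,\delta$ pointwise there). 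Finally choose $M$ so large that, by Chebyshev's inequality $|\{\,|v|>M\,\}|\le R^2/M^2$ (uniform in $v\in\overline{\mathrm{B}}_X(R)$), combined with $\|w\|_{L^\infty}\le C_0$ and $|f|\le\|g\|_{L^\infty}$, the part of both integrals over $E:=\{|x|\le\rho\}\cap\{\,|w|>\delta\,\}\cap\{\,|v|>M\,\}$ is small.

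It then remains to treat the core set $G:=\{x:|x|\le\rho,\ |w(x)|>\delta,\ |v(x)|\le M\}$, on which one has the \emph{uniform} bounds $v(x)+sw(x)\ge s\delta-M$ where $w>0$ and $v(x)+sw(x)\le M-s\delta$ where $w<0$. By the uniform version of \eqref{1.2} there is, for any $\varepsilon'>0$, a number $T_0$ with $f(x,t)\ge\bar f-\varepsilon'$ for all $t\ge T_0$ and $f(x,t)\le-\underline f+\varepsilon'$ for all $t\le-T_0$, uniformly in $x\in\R^N$; choosing $\varepsilon'$ with $\varepsilon' C_1$ small and then $s_0\ge(T_0+M)/\delta$, we get for $s\ge s_0$ that $f(x,v+sw)\,w\ge(\bar f-\varepsilon')\,w_+$ on $G\cap\{w>0\}$ and $f(x,v+sw)\,w=-f(x,v+sw)\,w_-\ge(\underline f-\varepsilon')\,w_-$ on $G\cap\{w<0\}$. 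Integrating over $G$ and using $\int_{\R^N}|w|\,\d x\le C_1$ yields $\int_G f(x,v+sw)\,w\,\d x\ge\int_G(\bar f w_++\underline f w_-)\,\d x-\varepsilon' C_1$.

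Assembling: on each of the three remainder sets $\int f(x,v+sw)\,w\,\d x\ge-\int g|w|\,\d x$ is bounded below by minus a small quantity, on $G$ we have the estimate above, and $\frac12\int_{\R^N}(\bar f w_++\underline f w_-)\,\d x$ exceeds $\frac12\int_G(\bar f w_++\underline f w_-)\,\d x$ by only a small quantity; since $\int_G(\bar f w_++\underline f w_-)\,\d x\ge 0$, the factor $\frac12$ leaves room to spare, and with the finitely many error terms chosen sufficiently small the claim follows for $s\ge s_0$. I expect the main obstacle to be precisely the control of the two remainder sets forced by the unboundedness of $\R^N$---the spatial tail $\{|x|>\rho\}$ (handled via $g\in L^2$ and the uniform decay of $\overline{\mathrm{B}}_{X^2}(1)$, itself a consequence of $\dim X^2<\infty$) and the set where $|v|$ is large (handled via Chebyshev, using only $v\in L^2$)---both of which are vacuous on a bounded domain; this is what makes the statement the nontrivial extension of \cite[Lemma 5.2]{LLZ} and \cite[Lemma 6.7]{LSS} referred to in the introduction. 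Once the integral has been localised to the compact set $G$, the remaining step is the classical Landesman--Lazer argument.
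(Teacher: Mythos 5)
Your proposal is correct, and its skeleton coincides with the paper's: both proofs isolate a ``core'' set on which $|v|$ is bounded and $|w|$ is bounded away from zero (so that $v+sw\to\pm\infty$ uniformly and the Landesman--Lazer condition \eqref{1.2} applies), control the set where $|v|$ is large by Chebyshev's inequality uniformly over $\bar{\mathrm{B}}_X(R)$, control the set where $|w|$ is small trivially, and use the finite dimensionality of $X^2$ (equivalence of the $L^1$ and $L^2$ norms there) to make all error terms uniform over $\bar{\mathrm{B}}_{X^2}(1)$. Where you genuinely diverge is in the treatment of spatial infinity, which is the crux of the extension from bounded to unbounded domains: the paper works on the balls $\Omega_n$, proves the estimate with constants claimed uniform in $n$, and then passes to the limit via dominated convergence (using $\sup\{\|w\|_{L^1}:w\in X^2,\ |w|=1\}<\infty$), whereas you estimate the tail $\{|x|>\rho\}$ directly, once and for all, using $|f(x,\cdot)|\le g$ with $g\in L^2(\R^N)$ together with the uniform decay $\sup_{w\in\bar{\mathrm{B}}_{X^2}(1)}\int_{|x|>\rho}|w|\,\d x\to 0$. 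Your route buys a cleaner dependency chain: $\rho$, then $\delta$, then $M$, then $s_0$ are chosen in that order with no circularity, which sidesteps the delicate point in the paper's argument that $\delta$ and $\sigma$ there are defined in terms of $|\Omega_n|$ while $s_1$ is asserted to be independent of $n$. The only extra input you use is $\|w\|_{L^\infty}\le C_0$ on $\bar{\mathrm{B}}_{X^2}(1)$ (for the large-$|v|$ piece); this is legitimate since the eigenfunctions spanning $X^2$ are bounded, but it can be avoided, as in the paper, by estimating that piece with Cauchy--Schwarz against $|w|=\|w\|_{L^2}\le 1$ instead of the sup norm, so that only $\phi_j\in L^1\cap L^2$ is ever needed.
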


\begin{proof}
Fix a positive integer $n$ sufficiently large. Denote $\Omega_n$ a ball in $\R^N$ centered at $0$ with radius $n$.
Let
$$
  I=\int_{\Omega_n}f(x,v+sw)w{\rm d}x-\frac{1}{2}\int_{\Omega_n}(\bar{f}w_++\underline{f}w_-){\rm d}x.
  $$
Noticing that $w=w_+-w_-,$ we can rewrite $I$ as $I=I_+-I_-,$ where
$$
  I_+=\int_{\Omega_n}(f(x,v+sw)-\bar{f}/2)w_+{\rm d}x,\hs I_-=\int_{\Omega_n}(f(x,v+sw)+\underline{f}/2)w_-{\rm d}x.$$
In what follows we first estimate $I_+$ for  $v\in {\rm\bar{B}}_X(R)$ and $w\in {\rm\bar{B}}_{X^2}(1)$.

Write $\{|v|\geq \sig\}=\{x\in\Omega_n:|v|\geq \sig\}$. Since
$$
  R^2\geq \int_{\mathbb{R}^N}|v|^2{\rm d}x \geq \int_{\Omega_n}|v|^2{\rm d}x\geq \int_{\{|v|\geq \sig\}}|v|^2{\rm d}x\geq \sig^2|\{|v|\geq \sig\}|,$$
one can deduce that $|\{|v|\geq \sig\}|\ra 0$ as $\sig \ra \infty$ uniformly with respect to $v\in {\rm\bar{B}}_X(R).$
Thus we can pick a $\sig>0$ sufficiently large so that
$$
  |\{|v|\geq \sig\}|^{1/2}<\de:=\ve/\big(8f_\infty(|\Omega_n|+1)\big),$$
where $f_\8=\sup_{x\in \R^N,s\in\R}|f(x,s)|$.

For each $v\in {\rm\bar{B}}_X(R)$ and $w\in {\rm\bar{B}}_{X^2}(1),$ let $D_n=\{|v|< \sig\}\cap\{w_+> \de\}.$ Then
$$
  \Omega_n=D_n\cup\{|v|\geq \sig\}\cup\{w_+\leq \de\}.$$
Therefore
\begin{align*}
  I_+&\geq\int_{D_n}(f(x,v+sw)-\bar{f}/2)w_+{\rm d}x-\int_{\{|v|\geq \sig\}}|f(x,v+sw)-\bar{f}/2|w_+{\rm d}x\\
  &\hs -\int_{\{w_+\leq \de\}}|f(x,v+sw)-\bar{f}/2|w_+{\rm d}x\\
  &\geq\int_{D_n}(f(x,v+sw)-\bar{f}/2)w_+{\rm d}x-2f_\infty\big(\int_{\{|v|\geq \sig\}}w_+\d x+\int_{\{w_+\leq \de\}}w_+{\rm d}x\big).
\end{align*}
It is easy to see that
\begin{align*}
  \int_{\{|v|\geq \sig\}}w_+{\rm d}x\leq \big(\int_{\{|v|\geq \sig\}}w_+^2{\rm d}x\big)^{1/2}|\{|v|\geq \sig\}|^{1/2}
  \leq |w|\de\leq \de.
\end{align*}
Note that
$$
  \int_{\{w_+\leq \de\}}w_+{\rm d}x\leq |\Omega_n|\de.
$$
Thereby
\begin{align}\label{4.5}
  I_+&\geq\int_{D_n}(f(x,v+sw)-\bar{f}/2)w_+{\rm d}x-2f_\infty(|\Omega_n|+1)\de\nonumber\\
  &=\int_{D_n}(f(x,v+sw)-\bar{f}/2)w_+{\rm d}x-\frac{\ve}{4}.
\end{align}
Since $z+s\eta \ra +\infty$ (as $s\ra +\infty$) uniformly with respect to $z\in[-\sig,\sig]$ and $\eta\geq \de,$ there exists $s_1>0$ (independent of $n$) such that if $s>s_1,$
$$
  f(x,z+s\eta)-\bar{f}/2\geq -\frac{\ve}{4|\Omega_n|^{1/2}},\Hs z\in [-\sig,\sig],\hs \eta\geq \de.
$$
Let $s>s_1.$ Then by the definition of $D_n$ (note that $w=w_+$ on $D_n$), we deduce that
\begin{align*}
  \int_{D_n}(f(x,v+sw)-\bar{f}/2)w_+{\rm d}x&\geq -\frac{\ve}{4|\Omega_n|^{1/2}}\int_{D_n}w_+{\rm d}x\\
  &\geq -\frac{\ve}{4|\Omega_n|^{1/2}}|D_n|^{1/2}(\int_{D_n}|w|^2{\rm d}x)^{1/2}\geq -\frac{\ve}{4}.
\end{align*}
It follows from \eqref{4.5} that if $s>s_1$,
\be\label{e:4.6}
  I_+\geq \int_{D_n}(f(x,v+sw)-\bar{f}/2)w_+{\rm d}x-\frac{\ve}{4}\geq -\frac{\ve}{2},
\ee
uniformly with respect to the positive integer $n$.
\vs
Define $g_n(x)=\chi_n(x)[(f(x,v+sw)-\bar{f}/2)w_+],$ where
$$
\chi_n(x)=\left\{\ba{ll}1, \hs &x\in \Omega_n; \\[1ex]
0,& x\in \R^N \backslash \Omega_n.\ea\right. $$
Then
$$
  \lim_{n\ra \infty}g_n(x)=\big(f(x,v+sw)-\bar{f}/2\big)w_+.$$
Noticing that
$$
  I_+=\int_{\Omega_n}(f(x,v+sw)-\bar{f}/2)w_+{\rm d}x=\int_{\R^N}\chi_n(x)[(f(x,v+sw)-\bar{f}/2)w_+]{\rm d}x,$$
by \eqref{e:4.6} we have
\be\label{e:4.7}
  \int_{\R^N}\chi_n(x)[(f(x,v+sw)-\bar{f}/2)w_+]{\rm d}x\geq -\frac{\ve}{2},
\ee
provided $s>s_1$, uniformly with respect to $n$. Since the norm $\|\cdot\|_{L^1(\R^N)}$ and that of $X=L^2(\R^N)$ are equivalent on $X^2$, one easily sees that $$\sup\{\|w\|_{L^1(\R^N)}: w\in X^2, |w|=1\}<\infty.$$
Therefore
\begin{align*}
  \int_{\R^N}|\chi_{n}(x)[(f(x,v+sw)-\bar{f}/2)w_+]|{\rm d}x&\leq 2f_\infty\int_{\R^N}|w|{\rm d}x\\
  &\leq 2f_\infty\|w\|_{L^1(\R^N)}<\infty.
\end{align*}
Thanks to the Lebesgue dominated convergence theorem and letting $n\ra \infty$ in \eqref{e:4.7}, we conclude that if $s>s_1$,
$$
  \int_{\R^N}(f(x,v+sw)-\bar{f}/2)w_+{\rm d}x\geq -\frac{\ve}{2}.
$$
Similarly, it can be shown that there exists $s_2>0$ (independent of $n$) such that
$$\int_{\R^N}(f(x,v+sw)+\bar{f}/2)w_-\d x\leq \frac{\ve}{2},$$ as long as $s>s_2.$
\vs
Set $s_0=\max\{s_1,s_2\}$. Then if $s>s_0,$ we have
$$
  \int_{\R^N}(f(x,v+sw)-\bar{f}/2)w_+{\rm d}x-\!\int_{\R^N}(f(x,v+sw)+\bar{f}/2)w_-{\rm d}x\geq \!-\frac{\ve}{2}-\frac{\ve}{2}=-\ve$$
for all $v\in {\rm\bar{B}}_X(R)$ and $w\in {\rm\bar{B}}_{X^2}(1)$. The proof of the Lemma is complete.
\end{proof}
\br\label{l4.1} In \cite{LLZ}, the authors used the Landesman-Lazer type condition to give a fundamental result on $f$ (see \cite[Lemma 5.2]{LLZ}), which plays an important role in establishing their main results on bifurcations from infinity and multiplicity of solutions of parabolic equations on bounded domains. Lemma \ref{l4.1} significantly extends this result to the case of unbounded domains.
\er
Now we restrict the system \eqref{4.3}-\eqref{4.4} to the global invariant manifold $\cM_\lam$ given by Theorem \ref{t4.1}. Then
\be\label{4.8}
w_t+A^2 w=P_2\tilde{f}(w+\xi_\lam(w)),\hs w\in Y^2,
\ee
where $w=P_2u$, $A^2=A_\lam|_{X^2}, \lam\in J$. Denote $\phi_\lam$ the semiflow generated by \eqref{4.8}. For each $0<a\leq b\leq \infty,$ write
$$
  \mathrm{B}_2[a,b]=\{w\in Y^2, a\leq |w|\leq b\}.
$$

\begin{lem}\label{l4.2}
Let the assumptions {\rm ($\mathbf{A1}$)-($\mathbf{A3}$)} and {\rm ({\bf F})} hold true. Then there exist $R_0>0,c_0>0$ such that the following assertions hold.
\benu
\item[(1)] For $\lam\in[\lam_*,\lam_*+\frac{1}{4}\beta)$, if $w(t)$ is a solution of \eqref{4.8} in $\mathrm{B}_{2}[R_0,\8]$, we have
\be\label{4.9}
\frac{{\rm d}}{{\rm d}t}|w(t)|^2\geq c_0 |w(t)|.
\ee
\item[(2)] If $R>R_0$, then there exists
 $0<\eta\leq\frac{1}{4}\beta$ such that for each $\lam\in[\lam_*-\eta,\lam_*)$, \eqref{4.9} holds true for any solution $w(t)$  of \eqref{4.8} in $\mathrm{B}_2[R_0,R]$.
 \item[(3)] There exists $\theta_1>0$ such that if $\lam\in[\lam_*-\theta_1,\lam_*)$, then the equation \eqref{4.8} has a
 positively invariant set $\mathrm{B}_2[r_\lam,R_\lam]$ with $r_\lam,R_\lam\ra \infty,$ as $\lam\ra \lam_*^{-}$.
 \eenu
\end{lem}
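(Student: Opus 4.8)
The plan is to reduce all three assertions to two one-dimensional differential inequalities for $\rho(t):=|w(t)|^{2}$ along solutions of \eqref{4.8}. Since $X^{2}$ is the finite-dimensional eigenspace of $A$ at $\lam_{*}$, the operator $A^{2}=A_{\lam}|_{X^{2}}$ is just multiplication by $\lam_{*}-\lam$, so \eqref{4.8} reads $w_{t}=(\lam-\lam_{*})w+P_{2}\tilde f\big(w+\xi_{\lam}(w)\big)$ on $Y^{2}=X^{2}$; using that $P_{2}$ is an orthogonal projection and $P_{2}w=w$, one obtains
\[
\frac{\d}{\d t}|w(t)|^{2}=2(\lam-\lam_{*})|w(t)|^{2}+2\int_{\R^{N}}f\big(x,w(t)+\xi_{\lam}(w(t))\big)\,w(t)\,\d x .
\]
Thus the whole statement hinges on controlling the last integral: from below for (1)--(2), from above for (3).

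For the lower bound I would combine Lemma~\ref{l4.1} with the boundedness of $\xi_{\lam}$. By Remark~\ref{r3.1} (applied via Theorem~\ref{t4.1}, with $\a=\tfrac12$) there is $R_{1}>0$ with $|\xi_{\lam}(w)|\leq R_{1}$ for all $\lam\in J$ and $w\in Y^{2}$. Writing $w=|w|\hat w$, $\hat w\in X^{2}$, $|\hat w|=1$, and applying Lemma~\ref{l4.1} with $v=\xi_{\lam}(w)$, $R=R_{1}$ and a small $\ve>0$ to be fixed, there is $s_{0}>0$ — independent of $\lam$ and of the direction $\hat w$ — such that for $|w|\geq s_{0}$
\[
\int_{\R^{N}}f\big(x,w+\xi_{\lam}(w)\big)w\,\d x=|w|\!\int_{\R^{N}}f\big(x,\xi_{\lam}(w)+|w|\hat w\big)\hat w\,\d x\ \geq\ |w|\Big(\tfrac12\!\int_{\R^{N}}(\bar f\,\hat w_{+}+\underline f\,\hat w_{-})\,\d x-\ve\Big).
\]
Since $\bar f,\underline f>0$ and, by the equivalence of the $L^{1}$- and $L^{2}$-norms on the finite-dimensional space $X^{2}$ (already used in the proof of Lemma~\ref{l4.1}), $c_{1}:=\inf\{\|\hat w\|_{L^{1}(\R^{N})}:\hat w\in X^{2},\,|\hat w|=1\}>0$, the bracket dominates $\delta_{0}:=\tfrac12\min\{\bar f,\underline f\}\,c_{1}>0$; taking $\ve=\delta_{0}/2$ and $R_{0}:=s_{0}$ yields
\[
\frac{\d}{\d t}|w(t)|^{2}\ \geq\ 2(\lam-\lam_{*})|w(t)|^{2}+\delta_{0}|w(t)|,\qquad |w(t)|\geq R_{0},
\]
which I shall call $(\star)$. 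On the other hand, $|f(x,s)|\leq g(x)$ with $g\in L^{2}$ gives $|\tilde f(u)|\leq|g|=:K$ for all $u$, hence along every solution
\[
\frac{\d}{\d t}|w(t)|^{2}\ \leq\ 2(\lam-\lam_{*})|w(t)|^{2}+2K|w(t)|,
\]
which I shall call $(\star\star)$.

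With $(\star)$ and $(\star\star)$ in hand, set $c_{0}:=\delta_{0}/2$. Assertion~(1) is immediate from $(\star)$, because for $\lam\in[\lam_{*},\lam_{*}+\tfrac14\beta)$ the quadratic term is nonnegative, so $\tfrac{\d}{\d t}|w|^{2}\geq\delta_{0}|w|\geq c_{0}|w|$ on $\mathrm{B}_{2}[R_{0},\infty]$. For~(2), given $R>R_{0}$, on $\mathrm{B}_{2}[R_{0},R]$ one has $|w|\leq R$, so $(\star)$ gives $\tfrac{\d}{\d t}|w|^{2}\geq|w|\big(\delta_{0}-2(\lam_{*}-\lam)R\big)\geq c_{0}|w|$ as soon as $\lam_{*}-\lam\leq\delta_{0}/(4R)$; hence $\eta:=\min\{\delta_{0}/(4R),\tfrac14\beta\}$ works. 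For~(3), put $r_{\lam}:=\delta_{0}/\big(4(\lam_{*}-\lam)\big)$ and $R_{\lam}:=(\delta_{0}+8K)/\big(4(\lam_{*}-\lam)\big)$, so that $r_{\lam}<R_{\lam}$ and $r_{\lam},R_{\lam}\to\infty$ as $\lam\to\lam_{*}^{-}$, and pick $\theta_{1}\in(0,\tfrac14\beta)$ small enough that $r_{\lam}\geq R_{0}$ for $\lam\in[\lam_{*}-\theta_{1},\lam_{*})$. A direct substitution then shows that on the inner sphere $\{|w|=r_{\lam}\}$, $(\star)$ gives $\tfrac{\d}{\d t}|w|^{2}\geq\tfrac12\delta_{0}r_{\lam}>0$, while on the outer sphere $\{|w|=R_{\lam}\}$, $(\star\star)$ gives $\tfrac{\d}{\d t}|w|^{2}\leq-\big(\tfrac12\delta_{0}+2K\big)R_{\lam}<0$; a standard first-exit-time argument for $\rho(t)=|w(t)|^{2}$ (were $\rho$ to leave $[r_{\lam}^{2},R_{\lam}^{2}]$, it would have to cross a boundary value with the wrong sign of $\rho'$) then shows $\mathrm{B}_{2}[r_{\lam},R_{\lam}]$ is positively invariant for \eqref{4.8}.

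The step I expect to be the main obstacle is the uniform use of Lemma~\ref{l4.1}: one must know both that the ball radius $R$ there can be chosen independent of $\lam$ — which is precisely the uniform boundedness of $\xi_{\lam}$ furnished by Remark~\ref{r3.1} — and that the resulting threshold $s_{0}$ is uniform over all unit directions $\hat w\in X^{2}$ and over $\lam\in J$, which is exactly the content of the uniform formulation of Lemma~\ref{l4.1}. Positivity of $\delta_{0}$ (hence of $c_{0}$) then rests on the $L^{1}$--$L^{2}$ norm equivalence on the finite-dimensional subspace $X^{2}$; everything else is elementary bookkeeping with the sign of $\lam-\lam_{*}$.
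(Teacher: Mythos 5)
Your proposal is correct and follows essentially the same route as the paper: the same energy identity for $|w(t)|^2$ on the eigenspace $Y^2$, the same application of Lemma \ref{l4.1} with the radius supplied by the uniform bound on $\xi_\lambda$ from Remark \ref{r3.1}, and the same use of the $L^1$--$L^2$ norm equivalence on the finite-dimensional space $X^2$ to get a positive lower bound $c_0$. The only divergence is in assertion (3), where the paper constructs $r_\lambda$ as a step function from sequences $R_m\uparrow\infty$, $\eta_m\downarrow 0$ and obtains the outer radius $R_\lambda=C_1/(\lambda_*-\lambda)$ via Gronwall's inequality, whereas you give explicit radii $r_\lambda,R_\lambda$ of order $C/(\lambda_*-\lambda)$ together with a boundary sign argument --- a minor, slightly more explicit variant of the same idea.
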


\begin{proof}
Taking the inner product of \eqref{4.8} with $w$ in $X$, we have
\be\label{4.10}
\frac{1}{2}\frac{{\rm d}}{{\rm d}t}|w|^2+\|w\|^2=\lam|w|^2+(\tilde{f}(w+\xi_\lam(w)),w).
\ee
Note that $\|w\|^2=\lam_* |w|^2$. Then it follows from \eqref{4.10} that
\be\label{4.11}
\frac{1}{2}\frac{{\rm d}}{{\rm d}t}|w|^2=(\lam-\lam_*)|w|^2+(\tilde{f}(w+\xi_\lam(w)),w).
\ee
We first estimate the last term in \eqref{4.11}.
Since the norm $\|\cdot\|_{L^1(\mathbb{R}^N)}$ of $L^1(\R^N)$ and that of $X=L^2(\mathbb{R}^N)$ are equivalent in $Y^2,$ one easily sees that
\be\label{4.12}
\min\{\|v\|_{L^1(\mathbb{R}^N)}:\,\,v\in Y^2,\,\,|v|=1\}:=m>0.
\ee
Choose a positive number $\de$ with $\de<\frac{1}{2}\min\{\bar{f},\underline{f}\}$. By virtue of Remark \ref{r3.1}, we see that $\|\xi_\lam(w)\|\leq R_1$ for some $R_1>0$. Thanks to Lemma \ref{l4.1}, there exists $s_0>0$ such that if
$s>s_0$,
\be\label{4.13}
(\~f(h+sv),v)=\int_{\mathbb{R}^N} f(x,h+sv)v\,{\rm d}x\geq
\frac{1}{2}\int_{\mathbb{R}^N}\(\ol fv_++\ul fv_-\){\rm d}x-\frac{1}{2}m\de \ee for all $h\in
\ol\mB_X(R_1)$ and $v\in \ol\mB_{X^2}(1)$.
Now we rewrite
$$w=sv,\hs\mb{where $s=|w|$}.$$
Clearly $v\in \partial \mathrm{B}_{X^2}(1)$. Hence if $s\geq s_0$, we deduce from \eqref{4.13} that
$$
(\tilde{f}(w+\xi_\lam(w)),w)=s(f(x,sv+\xi_\lam(w)),v)\geq s\(\frac{1}{2}\int_{\mathbb{R}^N}\(\ol f v_++\ul
f v_-\){\rm d}x-\frac{1}{2}m\de \).
$$
Note that
\begin{align*}
\frac{1}{2}\int_{\mathbb{R}^N}\(\ol fv_++\ul fv_-\)\d x -\frac{1}{2}m\de
&\geq\de \int_{\mathbb{R}^N}|v|{\rm d}x-\frac{1}{2}m\de\\[2ex]&\geq(\mb{by }\eqref{4.12})\geq \frac{1}{2}m\de.
\end{align*}
Thereby
\be\label{4.14} (\tilde{f}(w+\xi_\lam(w)),w)\geq
\frac{1}{2}m\de s=\frac{1}{2}m\de |w|. \ee
It follows from \eqref{4.11} and \eqref{4.14} that
\be\label{4.15}
\frac{\d}{\d t}|w(t)|^2\geq 2\(\lam-\lam_*\) |w|^2+m\de |w(t)|
\ee as long as $|w(t)|\geq s_0$.
\vs
Now we pick $R_0=s_0$,  $c_0=m\de/2$. Then if $\lam\in[\lam_*,\lam_*+\frac{1}{4}\beta)$, we infer from \eqref{4.15} that
$$
\frac{\d}{\d t}|w(t)|^2\geq m\de |w(t)|>c_0|w(t)|
$$ at any point $t$ where $|w(t)|\geq R_0$. Therefore the assertion  (1) holds.

Assume $R>R_0$ and let $\lam<\lam_*$. Take an $\eta>0$ such that $\eta R^2<m\de s_0/4$. Then for each $\lam\in[\lam_*-\eta,\lam_*)$, if $w(t)$ is a solution of \eqref{4.8} in $\mathrm{B}_2[R_0,R]$, we conclude from \eqref{4.15} that
\begin{align*}
\frac{\d}{\d t}|w(t)|^2&\geq -2|\lam-\lam_*|\,R^2+m\de |w(t)|\\
&\geq c_0|w(t)|+\(c_0|w(t)|-2\eta R^2\)\\
&\geq c_0|w(t)|+ \(c_0s_0-2\eta R^2\) \geq c_0|w(t)|,
\end{align*}  which justifies the second assertion (2).

Finally, we verify that the assertion (3) holds. Take a sequence $R_m>R_0 \,(m=1,2,\cdots)$ such that $R_0<R_1<R_2<\cdots<R_m<\cdots$ and $R_m\ra \8$ as $m\ra \8$. Then for each fixed $m$, we conclude from assertion (2) that there exists a sequence $\eta_m>0 \,(m=1,2,\cdots)$ with
$$\eta_1>\eta_{2}>\cdots>\eta_m>\cdots,\hs \eta_m\ra 0\,(m\ra \8),$$
such that if $\lam\in [\lam_*-\eta_m,\lam_*)$, \eqref{4.9} holds for any solution $w(t)$ of \eqref{4.8} in $\mathrm{B}_2[R_0,R_m]$. Furthermore, by the choice of $\eta_m$ and \eqref{4.9}, we deduce that if $\lam\in [\lam_*-\eta_m,\lam_*)$, the set $\mathrm{B}_2[R_m,\8]$ is positively invariant for the semiflow $\phi$ generated by \eqref{4.8}. 

On the other hand, let $\lam<\lam_*$. As the norm $\|\cdot\|_{L^1(\mathbb{R}^N)}$ of $L^1(\R^N)$ and that of $X=L^2(\mathbb{R}^N)$ are equivalent in $Y^2,$  one has
\begin{align}\label{4.16}
\big(\~f(w+\xi_\lam(w)),w\big)&=\int_{\R^N} f(x,w+\xi_\lam(w))w\d x\leq f_\8\|w\|_{L^1(\R^N)}\leq C_1|w|\nonumber\\
&\leq \frac{\lam_*-\lam}{2}|w|^2+\frac{1}{2(\lam_*-\lam)}C_1^2,
\end{align}
where $C_1>0$ depends on $f_\8$.
Combining \eqref{4.11} and \eqref{4.16}, it yields
$$
\frac{\d}{\d t}|w(t)|^2\leq (\lam-\lam_*)|w|^2+\frac{C_1^2}{\lam_*-\lam}.
$$
Hence it follows from the Gronwall's inequality that
\be\label{4.17}
  |w(t)|^2\leq \e^{-(\lam_*-\lam)t}|w(0)|^2+\big(1-\e^{-(\lam_*-\lam)t}\big)\frac{C_1^2}{(\lam_*-\lam)^2},\hs t\geq 0.
\ee
Set $$R_\lam=\frac{C_1}{\lam_*-\lam}.$$ Then one can deduce from \eqref{4.17} that
the set $\{w\in Y^2, |w|\leq R_\lam\}$ is positively invariant for $\phi$.

Recalling that $\eta_m\ra 0$ as $m\ra \8$, one can deduce that $$[\lam_*-\eta_1,\lam_*)=\Cup_{m\geq 1}[\lam_*-\eta_m,\lam_*-\eta_{m+1}).$$ We pick $\theta_1=\eta_1$. Then for each $\lam\in [\lam_*-\theta_1,\lam_*)$, there exists some $m\geq 1$ such that $\lam\in [\lam_*-\eta_m,\lam_*-\eta_{m+1})$. It is easy to see that $$R_m\ra \8\Hs \mb{as \hs $\lam\ra\lam_*^-$}.$$ Hence if we choose $r_\lam=R_m$, then one can conclude from the above argument that $\mathrm{B}_2[r_\lam,R_\lam]$ is the desired positively invariant set of the semiflow $\phi$ for $\lam\in [\lam_*-\theta_1,\lam_*)$. The proof of this lemma is complete.
\end{proof}
\subsection{dynamic bifurcation from infinity}
Now we study the bifurcation from infinity of \eqref{4.3}-\eqref{4.4} near $\lam_*$.
\begin{defn}\label{defn2.1}\cite{LLZ}
We say that the system \eqref{4.3}-\eqref{4.4} bifurcates from infinity at $\lam=\lam_*$
(or, $(\infty,\lam_*)$ is a bifurcation point), if for every $\varepsilon>0$, there exist $\lam\in \R$ with $|\lam-\lam_*|<\ve$ and a bounded full solution $u_\lam=u_{\lam}(t)$ of \eqref{4.3}-\eqref{4.4} such that  $$\|u_\lam\|_\8> 1/\ve,$$
where $\|u_\lam\|_\8=\sup_{t\in\R}\|u_{\lam}(t)\|$.
\end{defn}
Let $U\subset Y^2$ and denote $K_\8(\phi_\lam,U)$ the union of all bounded full orbits in $U$.  For simplicity, if $U=Y^2$, we write $K_\8(\phi_\lam,Y^2)=K_\8(\phi_\lam)$. Given a set $K\ss Y^2\times J$ and $\lam\in J$, write $$K[\lam]=\{x:(x,\lam)\in Y^2\times J\}.$$
$K[\lam]$ is called the {\it $\lam$-section} of $K$.
\vs
In the following we first give a result on bifurcations from infinity of the reduced system \eqref{4.8}.
\begin{thm}\label{t4.2}
Let the assumptions {\rm ($\mathbf{A1}$)-($\mathbf{A3}$)} and {\rm ({\bf F})} hold true, and $\lam_*\in \sig_p$. Then $(\infty,\lam_*)$ is a bifurcation point of \eqref{4.8}. Specifically,
there exist a sequence $\lam_n$ with $\lam_n\ra\lam_*$ $(n\ra\8)$ and a bounded full solution $u_n=u_{\lam_n}(t)$ of \eqref{4.8} such that  $$\|u_n\|_\8\ra \8,\hs \mb{as}\hs n\ra\8.$$ Moreover, there exists a neighborhood $\Lam$ of $\lam_*$ such that for each $\lam\in\Lam\setminus\{\lam_*\}$, \eqref{4.8} has a bounded full solution $v_\lam$,
which remains bounded on $\Lam$.
\end{thm}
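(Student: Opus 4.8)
The plan is to prove Theorem \ref{t4.2} by applying Conley index theory to the finite-dimensional reduced semiflow $\phi_\lam$ on $Y^2$, using the dynamical estimates from Lemma \ref{l4.2} as the main engine. First I would recall that $Y^2$ is $m$-dimensional with $m=\dim Y^2\geq 1$, so $\phi_\lam$ is the local semiflow generated by the ODE \eqref{4.8} and is automatically asymptotically compact; thus all the Conley index machinery of Section 2.3 applies. The first half of the statement (existence of a sequence $\lam_n\to\lam_*$ with unbounded full solutions $u_n$) follows almost directly from Lemma \ref{l4.2}(3): for $\lam\in[\lam_*-\theta_1,\lam_*)$ the annular set $\mathrm{B}_2[r_\lam,R_\lam]$ is positively invariant with $r_\lam\to\infty$ as $\lam\to\lam_*^-$. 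Since $\mathrm{B}_2[r_\lam,R_\lam]$ is a compact positively invariant set for a finite-dimensional semiflow, it contains a nonempty maximal compact invariant set (e.g.\ it contains an equilibrium, or at least a full bounded orbit, by the standard fact that a compact positively invariant set for a flow on $\R^m$ carries a full orbit — one takes $\omega$-limit points). Picking $\lam_n\uparrow\lam_*$ and a bounded full solution $u_n$ inside $\mathrm{B}_2[r_{\lam_n},R_{\lam_n}]$ gives $\|u_n\|_\8\geq |u_n|\geq r_{\lam_n}\to\infty$ (using that on $\cM_\lam$ the norm is controlled below by the $Y^2$-component since $\xi_\lam$ is bounded by Remark \ref{r3.1}). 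This already shows $(\infty,\lam_*)$ is a bifurcation point of \eqref{4.8}.

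For the second half — a bounded full solution $v_\lam$ for every $\lam$ in a two-sided neighborhood $\Lam$ of $\lam_*$, which stays bounded as $\lam$ varies — I would argue as follows. On the right side $\lam\in[\lam_*,\lam_*+\tfrac14\beta)$, Lemma \ref{l4.2}(1) gives $\tfrac{d}{dt}|w(t)|^2\geq c_0|w(t)|$ whenever $|w(t)|\geq R_0$; this says the large sphere $\{|w|=R_0\}$ (or any $\{|w|=R\}$, $R\geq R_0$) is a ``strict egress'' set for the ball $\mathrm{B}_{Y^2}(R)$ from outside, i.e.\ trajectories cannot stay in $\mathrm{B}_2[R_0,\infty]$ going forward. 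Hence the closed ball $N_R:=\{w\in Y^2:|w|\leq R\}$ with $R>R_0$ is an isolating block whose exit set is the whole boundary sphere (an ``$m$-disk with all of $S^{m-1}$ as exit set''), so the maximal compact invariant set $S_\lam$ in $N_R$ is isolated and nonempty, and in particular contains a bounded full solution $v_\lam$. The same block $N_R$ works for $\lam\in[\lam_*-\eta,\lam_*)$ by Lemma \ref{l4.2}(2) (choosing $\eta$ so that \eqref{4.9} holds on $\mathrm{B}_2[R_0,R]$, which again makes $\{|w|=R\}$ a strict egress set). Taking $\Lam=(\lam_*-\eta,\lam_*+\tfrac14\beta)$ and noting $N_R$ is a fixed bounded set independent of $\lam\in\Lam$, the full solutions $v_\lam$ all lie in $N_R$, hence remain bounded on $\Lam$. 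Nonemptiness of $S_\lam$ can be seen either from Wa\.zewski's principle / the nontriviality of the Conley index of an isolating block whose exit set is a proper subset of the boundary, or more elementarily: an isolating block is in particular a compact set that is not ``immediately left'' by all points, so it contains an invariant set; alternatively, continuity of $\phi_\lam$ plus Brouwer fixed point theorem on the ball gives an equilibrium.

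The main obstacle I anticipate is making the ``isolating block with all-boundary exit set'' argument clean in the $\lam\in[\lam_*,\lam_*+\tfrac14\beta)$ regime versus the $\lam<\lam_*$ regime, because Lemma \ref{l4.2}(1) only controls the dynamics \emph{outside} $\mathrm{B}_{Y^2}(R_0)$, so one must be slightly careful that $N_R$ with $R>R_0$ is genuinely an isolating block (the boundary sphere $\{|w|=R\}$ must consist entirely of strict egress points, which needs $R\geq R_0$ together with \eqref{4.9} at radius $R$) and that the isolated invariant set $S_\lam$ it contains is nonempty; for $\lam=\lam_*$ exactly, one should check that the argument degenerates gracefully or simply include $\lam_*$ by a limiting/continuity remark since the statement only asks for $\lam\in\Lam\setminus\{\lam_*\}$ to carry $v_\lam$ but boundedness ``on $\Lam$''. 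A secondary technical point is transferring statements about $w\in Y^2$ (solutions of \eqref{4.8}) to statements about $u=w+\xi_\lam(w)\in\cM_\lam\subset Y$ (full solutions of \eqref{4.3}--\eqref{4.4}): this uses only that $\cM_\lam$ is invariant (Theorem \ref{t4.1}), that $\xi_\lam$ is Lipschitz and bounded (Remark \ref{r3.1}), and that $\phi_\lam$ is conjugate to the restriction of $\Phi_\lam$ to $\cM_\lam$, so $\|u_\lam\|_\8$ and $\sup_t|w_\lam(t)|$ are comparable up to the additive bound $\sup_w\|\xi_\lam(w)\|$.
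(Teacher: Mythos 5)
Your proposal is correct, but it reaches both conclusions by a route genuinely different from the paper's. For the first assertion, the paper runs a Conley-index continuation argument: it computes the index of the maximal bounded invariant set of the linearized flow at $\lambda=\ve_1<\lambda_*$ and $\lambda=\ve_2>\lambda_*$ (getting $\Sigma^0$ and $\Sigma^p$), homotopes to the nonlinear equation using a priori bounds on bounded full solutions for $\lambda$ away from $\lambda_*$, and then invokes the abstract bifurcation-from-infinity theorem of \cite{LLZ} (their Theorem 3.5) to produce the sequence $\lambda_n\to\lambda_*$ with $\|u_n\|_\infty\to\infty$. You instead extract the unbounded solutions directly from the positively invariant annuli $\mathrm{B}_2[r_\lambda,R_\lambda]$ of Lemma \ref{l4.2}(3); this is exactly the mechanism the paper saves for Theorem \ref{t4.3}, and it is more elementary, even yielding unbounded full solutions for \emph{every} $\lambda$ in a left neighborhood rather than only along a sequence. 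For the second assertion, the paper deduces $h(\phi_{\lambda_*},K_\infty(\phi_{\lambda_*}))=\Sigma^p$ from the one-sided limit $\lambda\downarrow\lambda_*$ together with the confinement $K_\infty(\phi_\lambda)\subset\mathrm{B}_2[0,R_0]$, and then uses robustness of isolating neighborhoods to propagate a nontrivial index (hence a nonempty bounded invariant set) to a two-sided interval $[\lambda_*-\de,\lambda_*+\de]$. You instead exhibit the fixed ball $N_R$ ($R>R_0$) as an explicit isolating block whose exit set is the entire boundary sphere, using Lemma \ref{l4.2}(1) on $[\lambda_*,\lambda_*+\tfrac14\beta)$ and Lemma \ref{l4.2}(2) on $[\lambda_*-\eta,\lambda_*)$, so the index is $\Sigma^m\neq\ol{0}$ and the maximal invariant set in $N_R$ is nonempty uniformly in $\lambda$; this is self-contained and avoids both the homotopy \eqref{4.22} and the limiting index computation. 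Two small inaccuracies in your write-up that do not affect the argument: the exit set here is the \emph{whole} boundary (not a proper subset), with nontriviality coming from $D^m/S^{m-1}\simeq S^m\not\simeq \mathrm{pt}$; and the Brouwer-fixed-point alternative does not apply as stated, since $N_R$ is not positively invariant --- the correct elementary fallback is the Wa\.zewski retract argument you also mention.
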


\begin{proof}
Consider the system
\be\label{4.19}
w_t+A^2 w=P_2\tilde{f}(w+\xi_\lam(w)),\hs w\in Y^2,
\ee
where $A^2=P_2A_\lam=P_2(A-\lam I)$, $\lam\in J=(\lam_*-\frac{\beta}{4},\lam_*+\frac{\beta}{4})$.
Let us study the following linear equation
\be\label{e4.19}
\frac{\d w}{\d t}+A^2w=0. \ee
Pick two numbers $\ve_1,\ve_2$ with $\ve_1\in (\lam_*-\frac{\beta}{4},\lam_*)$ and $\ve_2\in (\lam_*,\lam_*+\frac{\beta}{4})$, respectively. Then if $\lam=\ve_1,\ve_2$, the trivial solution \{0\} is an isolated invariant set for the semiflow $\psi_\lam$ generated by equation \eqref{e4.19} in $Y^2$. By \cite{Ry} (see
Chapter I, Corollary 11.2) we know that there is a positive
integer $p$ such that \be\label{4.20}
h(\psi_{\ve_1},\{0\})=\Sigma^0, \hs h(\psi_{\ve_2},\{0\})=\Sigma^p.\ee ($p$ is
actually the algebraic multiplicity of the eigenvalue
$\lam_*$ of $A^2$.)

Now we consider the nonlinear equation  \be\label{4.22}
\frac{\d w}{\d t}+A^2w-\nu P_2\tilde{f}(w+\xi_\lam(w))=0, \ee where $\nu\in[0,1]$ is the homotopy parameter. By appropriately modifying the argument in the proof of \cite[Chapter II, Theorem 5.1]{Ry} (see also the proof of \cite[Theorem 3.2]{W2}), one can verify that for each $\ve>0$ with $$\ve_1<\lam_*-\ve<\lam_*+\ve<\ve_2,$$ there exists $R_\ve>0$ such that if $u_\lam(t)$ is a bounded full solution of \eqref{4.22} for $\lam\in[\ve_1,\lam_*-\ve]\cup[\lam_*+\ve,\ve_2]$ and $\nu\in [0,1]$, then
\be\label{4.23}\|u_\lam\|_\8<R_\ve.\ee
Denote $\psi_\lam^\nu$  the semiflow generated by the equation \eqref{4.22}.
By the continuation property of the Conley index, we deduce that
\be\label{4.24}\ba{ll}
 h\(\phi_\lam,K_\8({\phi_\lam})\)&= h\(\psi_\lam^1,K_\8({\psi^1_\lam})\)\\[1ex]&=h\(\psi_\lam^0,K_\8({\psi^0_\lam})\)=h(\psi_{\ve_1},\{0\})=\Sigma^0\ea
 \ee for $\lam\in[\ve_1,\lam_*-\ve]$ and
 \be\label{4.25}\ba{ll}
h\(\phi_\lam,K_\8({\phi_\lam})\)&= h\(\psi_\lam^1,K_\8({\psi^1_\lam})\)\\[1ex]&=h\(\psi_\lam^0,K_\8({\psi^0_\lam})\)=h(\psi_{\ve_2},\{0\})=\Sigma^p\ea
 \ee for $\lam\in[\lam_*+\ve,\ve_2]$.
Moreover, we infer from \eqref{4.23} that for the $\ve>0$,
$$
  K_\8({\phi_\lam})[\lam]\ss {\rm B}_{Y^2}(R_\ve),\hs \lam \in[\ve_1,\lam_*-\ve]\cup[\lam_*+\ve,\ve_2],
  $$
where ${\rm B}_{Y^2}(R_\ve)$ denotes a ball in $Y^2$ centered at $0$ with radius $R_\ve$.
Hence, we conclude from \cite[Theorem 3.5]{LLZ} that
there is a sequence $\lam_n\ra \lam_* (n\ra\8)$ and a sequence of bounded full solutions $u_n=u_{\lam_n}(t)$ of \eqref{4.8} for $\lam=\lam_n$ such that \be\label{}\|u_n\|_\8\ra \8,\hs n\ra\8.\ee

On the other hand, we infer from Lemma \ref{l4.2} that there are $R_0,c_0>0$ such that for each $\lam\in[\lam_*,\lam_*+\frac{1}{4}\beta)$, if $w(t)$ is a bounded full solution of \eqref{4.8} in $\mathrm{B}_{2}[R_0,\8]$, then \eqref{4.9} holds true. Therefore
\be\label{4.27}K_\8(\phi_\lam)\subset \mathrm{B}_2[0,R_0], \hs \lam\in[\lam_*,\lam_*+\frac{1}{4}\beta).\ee
Since $\ve_1\in (\lam_*-\frac{\beta}{4},\lam_*)$, $\ve_2\in (\lam_*,\lam_*+\frac{\beta}{4})$ and $\ve$ can be arbitrary, we deduce from \eqref{4.24} and \eqref{4.25} that
\be\label{e5.34}
 h(\phi_\lam,K_\8(\phi_\lam))= \left\{\ba{ll}\Sigma^0,\Hs\lam\in(\lam_*-\frac{\beta}{4},\lam_*);\\[1ex]
 \Sigma^{p},\Hs \lam\in(\lam_*,\lam_*+\frac{\beta}{4}).\ea\right.
 \ee
Thus it follows from \eqref{4.27} that $$h(\phi_{\lam_*},K_\8(\phi_{\lam_*}))=\Sigma^{p}.$$
Now we pick a bounded isolating neighborhood $N$ of $K_\8(\phi_{\lam_*})$. Then there exists $\de>0$ such that $N$ is an isolating neighborhood of $\phi_\lam$ for $\lam\in [\lam_*-\de,\lam_*+\de]$. Thus $$h(\phi_\lam,K_\lam)=\Sigma^{p},\hs \lam\in [\lam_*-\de,\lam_*+\de].$$
where $K_\lam=K_\8(\phi_\lam,N)$. Choose a full solution $v_\lam$ in $K_\lam$ and write $\Lam=[\lam_*-\de,\lam_*+\de]$. Then $\|v_\lam\|_\8$ remains bounded on $\Lam$.
\end{proof}

By virtue of Lemma \ref{l4.2}, we can present our dynamic bifurcation results from infinity for the system \eqref{4.3}-\eqref{4.4}.
\begin{thm}\label{t4.3}
Let the assumptions {\rm ($\mathbf{A1}$)-($\mathbf{A3}$)} hold. Assume the Lipschitz constant $L_f$ of $f$ satisfies {\rm ({\bf F})}. 
Then there exists $\theta_1\in (0,\frac{1}{4}\beta)$ such that for each $\lam\in \Lam_1:=[\lam_*-\theta_1,\lam_*)$, the system $\Phi_\lam$ has a compact invariant set $\cS^\8_\lam$, which takes the shape of $(m-1)$-dimensional topological sphere $\mathbb{S}^{m-1}$, and
\be\label{4.19}
  \lim_{\lam\ra \lam_*^-}\min\{\|w\|: w\in \cS^\8_\lam\}=\8.
\ee
Moreover, there exists a two-sided neighborhood $\Lam$ of $\lam_*$ such that for each $\lam\in \Lam$, $\Phi_\lam$ has a compact invariant set $\cK_\lam$, which remains bounded on $\Lam$.
\end{thm}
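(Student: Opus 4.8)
The plan is to transport the problem onto the finite-dimensional invariant manifold of Theorem \ref{t4.1} and then read off the topology of the bifurcating set from the shape theory of attractors, which also lets us bypass any compactness question for $\Phi_\lam$ itself. First I would record the reduction. By Theorem \ref{t4.1}, for $\lam\in J$ the manifold $\cM_\lam=\{w+\xi_\lam(w):w\in Y^2\}$ is invariant under $\Phi_\lam$; since $\xi_\lam$ is Lipschitz, the graph map $\iota_\lam\colon w\mapsto w+\xi_\lam(w)$ is a homeomorphism of $Y^2\cong\R^m$ onto $\cM_\lam$ that conjugates the reduced semiflow $\phi_\lam$ of \eqref{4.8} with $\Phi_\lam|_{\cM_\lam}$. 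Thus it suffices to build the required invariant sets for $\phi_\lam$ in $Y^2$ and push them forward by $\iota_\lam$; the images are compact invariant sets of $\Phi_\lam$ of the same shape. I will use two elementary facts throughout: on $Y^2$ one has $\|w\|^2=\lam_*|w|^2$ (with $\lam_*\geq a_1>0$ by ($\mathbf{A1}$)), and, by Remark \ref{r3.1}, $\sup_{w\in Y^2}\|\xi_\lam(w)\|\leq C$ with $C$ independent of $\lam\in J$.

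For the bifurcating sphere I would apply Lemma \ref{l4.2}(3): there is $\theta_1\in(0,\frac14\beta)$ such that for every $\lam\in\Lam_1=[\lam_*-\theta_1,\lam_*)$ the shell $N_\lam:=\mathrm{B}_2[r_\lam,R_\lam]$ is positively invariant under $\phi_\lam$, with $r_\lam,R_\lam\to\8$ as $\lam\to\lam_*^-$; after slightly enlarging $R_\lam$ one may moreover take the outer boundary of $N_\lam$ to be strictly ingress. Since $N_\lam$ is compact in the finite-dimensional space $Y^2$, the restricted semiflow admits a global attractor, namely the maximal compact invariant set $\cA_\lam=\bigcap_{t\geq0}\phi_\lam(t)N_\lam$, which attracts $N_\lam$. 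The strict estimate \eqref{4.9} on the inner sphere $\{|w|=r_\lam\}$ forces $\cA_\lam\subset\{|w|>r_\lam\}$ (a full orbit through such a point would, run backward, immediately leave $N_\lam$), and with the choice of outer boundary this places $\cA_\lam$ in $\mb{int}\,N_\lam$, so $N_\lam$ is a positively invariant neighborhood of $\cA_\lam$ inside its basin. As $N_\lam$ deformation retracts onto $\{|w|=r_\lam\}\cong\mathbb S^{m-1}$, the shape theory of attractors \cite{KR} shows $\cA_\lam\hookrightarrow N_\lam$ is a shape equivalence, hence $\cA_\lam$ has the shape of $\mathbb S^{m-1}$. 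Setting $\cS^\8_\lam:=\iota_\lam(\cA_\lam)$ then gives a compact invariant set of $\Phi_\lam$ with the shape of $\mathbb S^{m-1}$, and for $v=\iota_\lam(w)\in\cS^\8_\lam$ we get $\|v\|\geq\|w\|-\|\xi_\lam(w)\|\geq\sqrt{\lam_*}\,r_\lam-C\to\8$ as $\lam\to\lam_*^-$, which is the claimed blow-up.

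For the final assertion I would invoke Theorem \ref{t4.2} directly: it provides a two-sided neighborhood $\Lam=[\lam_*-\de,\lam_*+\de]$ of $\lam_*$ and a bounded set $N\subset Y^2$ isolating for every $\phi_\lam$, $\lam\in\Lam$, with nonzero Conley index $h(\phi_\lam,K_\lam)=\Sigma^{p}$, where $K_\lam=K_\8(\phi_\lam,N)$; hence $K_\lam\neq\es$ and $K_\lam\subset N$ is bounded uniformly on $\Lam$, so $\cK_\lam:=\iota_\lam(K_\lam)$ is a nonempty compact invariant set of $\Phi_\lam$ that, since $\xi_\lam$ is bounded, stays bounded in $Y$ over $\Lam$. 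The substantive point, and the step I expect to be the main obstacle, is the second one: the bifurcating object has to be detected through the \emph{unpointed} shape of an attractor rather than through a Conley index (which only records the homotopy type of a pointed quotient), and this hinges on the finite-dimensional reduction — it is what makes $N_\lam$ an ANR on which the shape description of attractors applies and guarantees that $\cA_\lam$ genuinely lives inside the positively invariant shell. The remaining chores — the ingress/interior properties of $N_\lam$ and the uniform bound on $\xi_\lam$ — are routine given Lemma \ref{l4.2} and Remark \ref{r3.1}.
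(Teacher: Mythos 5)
Your proposal is correct and follows essentially the same route as the paper: it reduces to the finite-dimensional system on the invariant manifold, takes the global attractor inside the positively invariant annulus $\mathrm{B}_2[r_\lam,R_\lam]$ from Lemma \ref{l4.2}(3), identifies its shape as $\mathbb{S}^{m-1}$ via the shape theory of attractors, pushes forward by the graph map, and obtains the bounded branch from Theorem \ref{t4.2}. The extra details you supply (strict ingress of the boundary, the explicit lower bound $\sqrt{\lam_*}\,r_\lam-C$ for \eqref{4.19}) are consistent refinements of steps the paper states more briefly.
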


\begin{proof}
Let $\theta_1$  be given in Lemma \ref{l4.2}. By virtue of Lemma \ref{l4.2} (3), we see that for each  $\lam\in \Lam_1$, the semiflow $\phi_\lam$ generated by \eqref{4.8} has a positively invariant set ${\rm B}_\lam:={\rm B}_2[r_\lam,R_\lam]$. Note that $Y_2$ is finite dimensional. Then one can conclude from the attractor theory (see e.g., \cite{Chep,Tem}) that the set $$\cA_\lam^\8=\Cap_{s\geq 0}\ol{\Cup_{t\geq s}\phi_\lam(t){\rm B}_\lam}$$  is a global attractor of the semiflow $\phi_\lam$ restricted on ${\rm B}_\lam$.

By the basic knowledge on algebraic topology, it can be easily seen that ${\rm B}_\lam$ has the homotopy type of an $(m-1)$-dimensional topological sphere. Thus ${\rm B}_\lam$ enjoys the same shape of $\mathbb{S}^{m-1}$. Thanks to the shape theory of attractors (see \cite[Theorem 3.6]{KR} or \cite{San,WLD}), we conclude that $\cA_\lam^\8\ss {\rm B}_\lam$ has the shape of $\mathbb{S}^{m-1}$.

Set
$$
  \cS_\lam^\8=\{w+\xi_\lam(w):w\in \cA_\lam^\8\}.
$$
Then $\cS_\lam^\infty\ss\cM_\lam$ is a compact invariant set of the system $\Phi_\lam$ generated by \eqref{4.3}-\eqref{4.4}, which takes the shape of an $(m-1)$-dimensional sphere. By the construction of ${\rm B}_\lam$, we see that $r_\lam,R_\lam\ra \8$ as $\lam\ra\lam_*^-$,  from which one can conclude that \eqref{4.19} holds true.

Let $\Lam$ be obtained in Theorem \ref{t4.2}. Then we infer from the proof of Theorem \ref{t4.2} that for each $\lam\in \Lam$, $\phi_\lam$ has a compact invariant set $K_\lam$, which remains bounded on $\Lam$.
Similarly, set $$\cK_\lam=\{w+\xi_\lam(w):w\in K_\lam\}.$$
Clearly, $\cK_\lam$ is the desired compact invariant set of $\Phi_\lam$ for $\lam\in \Lam$. The proof of the theorem is complete.
\end{proof}

As a consequence of Theorem \ref{t4.3}, we can obtain our main results on bifurcations from infinity and multiplicity of the Schr\"odinger equation \eqref{e1.1}.

\begin{thm}
Let the assumptions {\rm ($\mathbf{A1}$)}-{\rm ($\mathbf{A3}$)} hold and $\lam_*\in \sig_p$. Assume the Lipschitz constant $L_f$ of $f$ satisfies {\rm ({\bf F})}. Then there exists $\theta\in(0,\frac{1}{4}\beta)$ such that for each $\lam\in \Lam_2:=[\lam_*-\theta,\lam_*),$
the Schr\"odinger equation \eqref{e1.1} has at least three distinct nontrival solutions $e_\lam^1,e_\lam^2,e^3_\lam$ with
$$
  \|e^{i}_\lam\|\ra \infty,\hs \mb{as \,\,$\lam\ra \lam_*^-$,\hs $i=1,2$,}
$$
whereas $e_\lam^3$ remains bounded on $\Lam_2$.
\begin{proof}
Let $\theta_1$ and $\Lam$ be given in Theorem \ref{t4.3}. Pick a positive number $\theta$ such that $\theta<\min\{\theta_1,\de\}$, where $\de>0$ is chosen in the proof of Theorem \ref{t4.2}.
By virtue of Theorem \ref{t4.3}, we deduce that for each $\lam\in[\lam_*-\theta,\lam_*)$, the system $\Phi_\lam$ has a compact invariant set $\cS^\8_\lam$.
In what follows we show that $\cS^\8_\lam$ has at least two distinct nontrivial stationary solutions $e^1_\lam$ and $e^2_\lam$, which are actually the solutions of \eqref{e1.1}.

Observing that $\cS^\8_\lam$ has the shape of $\mathbb{S}^{m-1}$ and $m\geq 1$, we deduce that $\cS^\8_\lam$ has at least two distinct points $u$ and $v$. If $u$ and $v$ are equilibria, then the result holds true. Now we suppose that $u$ is not an equilibrium. Then there exists a full solution $\gam=\gam(t)$ of $\Phi_\lam$ in $\cS_\lam^\8$ such that $\gam(0)=u$. Since $\Phi_\lam$ is a gradient system, we conclude that $$\w(\gam)\cap\w^*(\gam)=\emptyset.$$ Noticing that $\w(\gam)$ and $\w^*(\gam)$ consist of equilibrium points, one can see that $\Phi_\lam$ has two distinct equilibria in $\cS^\8_\lam$. Recalling that $\cS^\8_\lam$ satisfies \eqref{4.19}, we conclude that the equation \eqref{e1.1} has two distinct nontrivial solutions $e^1_\lam$ and $e^2_\lam$ with
$$
  \|e^{i}_\lam\|\ra \infty,\hs \mb{as \,$\lam\ra \lam_*^-$,\hs $i=1,2$.}
$$
Theorem \ref{t4.3} also asserts that for each $\lam\in \Lam=[\lam_*-\de,\lam_*+\de]$, the system $\Phi_\lam$ has a compact invariant set $\cK_\lam$, which remains bounded on $\Lam$. Thus $\Phi_\lam$ has at least one equilibrium $e_\lam^3$ in $\cK_\lam$. Therefore $e_\lam^3$ is the solution of the Schr\"odinger equation \eqref{e1.1} fulfilling the requirements of the theorem for $\lam\in \Lam_2$. The proof of the theorem is complete.
\end{proof}
\end{thm}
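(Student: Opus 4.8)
The plan is to read off all three solutions from the dynamical picture already assembled in Theorem \ref{t4.3}. For $\lam\in[\lam_*-\theta_1,\lam_*)$ that theorem gives a compact invariant set $\cS^\8_\lam\ss\cM_\lam$ of $\Phi_\lam$ with the shape of $\mathbb{S}^{m-1}$ and with $\min\{\|w\|:w\in\cS^\8_\lam\}\to\8$ as $\lam\to\lam_*^-$; it also gives, on a two-sided neighborhood $\Lam=[\lam_*-\de,\lam_*+\de]$ of $\lam_*$, a compact invariant set $\cK_\lam$ that stays uniformly bounded. First I would fix any $\theta<\min\{\theta_1,\de\}$, so that on $\Lam_2=[\lam_*-\theta,\lam_*)$ both $\cS^\8_\lam$ and $\cK_\lam$ are available, and then record the elementary translation: a point $w+\xi_\lam(w)\in\cM_\lam$ is an equilibrium of $\Phi_\lam$ precisely when it solves $Au=\lam u+\tilde f(u)$, i.e.\ when it is a solution of \eqref{e1.1}. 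So it suffices to locate two equilibria of $\Phi_\lam$ inside $\cS^\8_\lam$ and one inside $\cK_\lam$.

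The heart of the argument is to turn the topological datum ``$\cS^\8_\lam$ has the shape of $\mathbb{S}^{m-1}$'' into two genuine equilibria. Since $m\geq1$, a set with this shape is not a singleton (it is two points if $m=1$ and a connected set with more than one point if $m\geq2$), so I may pick two distinct points $u,v\in\cS^\8_\lam$. Here I would invoke that $\Phi_\lam$ — equivalently the reduced finite-dimensional flow $\phi_\lam$ on $Y^2$ from \eqref{4.8} — is a gradient system, the natural strict Lyapunov functional being $E_\lam(u)=\frac{1}{2}\|u\|^2-\frac{\lam}{2}|u|^2-\int_{\R^N}F(x,u)\,\d x$ with $\partial_sF=f$, which strictly decreases along every nonconstant orbit. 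Consequently, along a bounded full trajectory $\gam$ the limit sets $\w(\gam)$ and $\w^*(\gam)$ consist of equilibria and satisfy $\w(\gam)\cap\w^*(\gam)=\emptyset$ unless $\gam$ reduces to a single rest point. Now, if both $u$ and $v$ are equilibria we are done; otherwise, say $u$ is not an equilibrium, so some full trajectory $\gam$ in $\cS^\8_\lam$ with $\gam(0)=u$ has $\w(\gam)\ne\w^*(\gam)$, and picking one equilibrium from each limit set again produces two distinct equilibria $e^1_\lam\ne e^2_\lam$ in $\cS^\8_\lam$. Since $e^i_\lam\in\cS^\8_\lam$, the divergence estimate in Theorem \ref{t4.3} forces $\|e^i_\lam\|\geq\min\{\|w\|:w\in\cS^\8_\lam\}\to\8$ as $\lam\to\lam_*^-$.

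For the bounded branch I would use the same gradient structure: a nonempty compact invariant set of a gradient system always contains a rest point, so $\cK_\lam$ contains an equilibrium $e^3_\lam$, which is again a solution of \eqref{e1.1}; since $\cK_\lam$ stays uniformly bounded on $\Lam\supset\Lam_2$, so does $\|e^3_\lam\|$. To guarantee that $e^1_\lam,e^2_\lam,e^3_\lam$ are pairwise distinct I would shrink $\theta$ once more so that $\|e^1_\lam\|$ and $\|e^2_\lam\|$ exceed the uniform bound on $\|e^3_\lam\|$ for every $\lam\in\Lam_2$; the inequality $e^1_\lam\ne e^2_\lam$ is built into the previous step. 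I expect the only real obstacle to be the middle step: (a) confirming cleanly that $\Phi_\lam$ (hence $\phi_\lam$) is gradient-like with a \emph{strict} Lyapunov functional, so that its bounded full trajectories have disjoint forward and backward limit sets; and (b) the structural fact that a compact invariant set of a gradient flow which is not a singleton must contain two distinct equilibria (equivalently, a set with the shape of $\mathbb{S}^{m-1}$, $m\geq1$, cannot collapse to one rest point). Everything else — matching the parameters $\theta_1,\de,\theta$, passing between $\Phi_\lam$ and $\phi_\lam$ on $\cM_\lam$, and comparing norms for distinctness — is routine bookkeeping.
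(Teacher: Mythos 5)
Your proposal is correct and follows essentially the same route as the paper's own proof: fix $\theta<\min\{\theta_1,\de\}$, extract two distinct equilibria from $\cS^\8_\lam$ using its $\mathbb{S}^{m-1}$ shape together with the gradient structure (disjoint $\w$- and $\w^*$-limit sets of a nonstationary full trajectory), and take $e^3_\lam$ as an equilibrium inside the bounded invariant set $\cK_\lam$. Your extra touches --- writing out the Lyapunov functional explicitly and shrinking $\theta$ so that the norm gap guarantees $e^3_\lam$ is distinct from $e^1_\lam,e^2_\lam$ --- are sensible elaborations of points the paper leaves implicit, not a different argument.
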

\br
 The ``dual" version of all our results holds true if, instead of \eqref{1.2}, we assume that
\eqref{1.5} is fulfilled.
\er
\vs
\noindent{\bf Acknowledgments}
\Vs
This work is supported by National Natural Science Foundation of China (Grant Nos. 11871368, 11801190).

\baselineskip 15pt

\medskip
\medskip

\end{document}